\newcommand{\Input}{\textbf{Input: }}
\newcommand{\Output}{\textbf{Output: }}
\DeclareMathOperator{\linearspan}{span}
\DeclareMathOperator{\rank}{rank}
\DeclareMathOperator{\Tr}{Tr}
\DeclareMathOperator{\inner}{Inner}
\DeclareMathOperator{\inder}{Inder}
\DeclareMathOperator{\aut}{Aut}
\DeclareMathOperator{\End}{End}
\DeclareMathOperator{\ad}{ad}
\DeclareMathOperator{\obj}{Obj}
\newtheorem{theorem}{Theorem}
\newtheorem{lemma}[theorem]{Lemma}
\newtheorem{proposition}[theorem]{Proposition}
\newtheorem{definition}[theorem]{Definition}
\newtheorem{example}[theorem]{Example}
\title{Quadratic $2$-step Lie algebras:\\
Computational algorithms and classification}
\author{Pilar Benito,  Daniel de-la-Concepci\'on,\\
Jorge Rold\'an-L\'opez, Iciar Sesma}
\date{\quad}
\begin{document}
\maketitle\vspace{-1.5cm}

%\author{Pilar Benito}
%%\address{Dpto. Matem\'aticas y Computaci\'on, Universidad de La Rioja, 26004, Logro\~no, Spain}
%\ead{pilar.benito@unirioja.es}
%%\ead[url]{URL 1}
%\author{Daniel de-la-Concepci\'on}
%%\address{Universidad de La Rioja}
%\ead{daniel-de-la-concepcion@unirioja.es}
%%\ead[url]{URL 2}
%\author{Jorge Rold\'an-L\'opez}
%%\address{Universidad de La Rioja}
%\ead{joroldan@unirioja.es}
%%\ead[url]{URL 3}
%\author{Iciar Sesma}
%\address{Dpto. Matem\'aticas y Computaci\'on, Universidad de La Rioja, 26004, Logro\~no, Spain}
%\ead{icsesma@unirioja.es}
%%\ead[url]{URL 4}

\begin{abstract}
Taking into account the theoretical results and guidelines given in this work, we introduce a computational method to construct any 2-step nilpotent quadratic algebra of $d$ generators. Along the work we show that the key of the classification of this class of metric algebras relies on certain families of skewsymmetric matrices. Computational examples for $d\leq 8$ will be given. 
\end{abstract}

{\parindent= 0em \small  {\bf Keywords:} Lie algebra, invariant bilinear form, nilpotent, free nilpotent, derivation, automorphism. }

{\parindent=10em \small  \sl }

\medskip

{\parindent=0em \small {\bf Classification MSC 2010:} 17B01, 17B30}

%\begin{keyword}
%Lie algebra \sep invariant bilinear form \sep nilpotent \sep free nilpotent \sep derivation \sep automorphism.
% \MSC[2010] 17B10 \sep 17B30
%\end{keyword}
%\end{frontmatter}

\section {Introduction}
%What is the best Riemannian Metric on a Compact Manifold? A deep answer can be found in Chapter 11 in \cite{Be03}. This question is the first problem in the problem list Yau [1296] but it was posed much earlier by Hopf in Hopf 1932 [730]. As Berger states, a metric can endow a manifold with a nice geometrical structure to obtain purely topological or other types of results using metric methods. According to \cite{Bo07} and references therein, the structure theory of naturally  reductive homogeneous spaces can be reduced to the algebraic problem of real metrised finite-dimensional Lie algebras. These facts, among others, motivate the study of quadratic Lie algebras.
 
A bilinear form $\varphi$ on a nonassociative algebra $A$ is said to be invariant if and only if
\begin{equation}\label{invariant}
\varphi(xy,z)=\varphi(x,yz)
\end{equation}
for all $x,y,z\in A$. The pair $(A,\varphi)$ is called quadratic (also metric) algebra in case $\varphi$ is invariant, symmetric ($\varphi(x,y)=\varphi(y,x)$) and nondegenerate ($\varphi(x,y)=0$ for all   $y\in A$ implies $x=0$). The main subject of this paper is the classification of quadratic two step ($2$-step in the sequel) nilpotent Lie algebras. Some computational algorithms attached to the classification results will be given.\medskip

A Lie algebra $(\mathfrak{g},[x,y])$ is said to be $2$-step nilpotent (or metabelian) of type $d$ in case $\mathfrak{g}^3=\linearspan\,\langle[[xy]z]:x,y,z\in \mathfrak{g}\rangle=0$ and $d=\dim\mathfrak{g}-\dim\mathfrak{g}^2$ where $\mathfrak{g}^2=\linearspan\,\langle[xy]:x,y,z\in \mathfrak{g}\rangle$. It is well known that the type of any $2$-step nilpotent Lie algebra is $\geq 2$.\medskip

Following \cite{NoRe97}, the classification of quadratic $2$-step Lie algebras is equivalent to the classification of  alternating trilinear forms, which is an open problem. Ovando (2007) shows the existence of real quadratic $2$-step Lie algebras of arbitrary type $d\geq 3$ with $d=4$ as the only exception. In \cite{O07}, it is proved that the algebras in  this class can be achieved from any injective homomorphism $\rho\colon\mathfrak{v}\to \mathfrak{so}(\mathfrak{v}, \langle\cdot,\,\cdot\rangle)$ of a vector space $\mathfrak{v}$ equipped with an inner product $\langle\cdot,\,\cdot\rangle$. In addition, the map $\rho$ must satisfy the relation
$\rho(v)(v)=0$ for all $v\in \mathfrak{v}$. The last condition can be rewritten as,
\begin{equation}\label{ro}
\rho(v)(u)+\rho(u)(v)=0.
\end{equation}

The main ideas in \cite{O07} and the results provided in \cite{BeCoLa17} allow us to establish some alternative results on the existence and isomorphisms of quadratic $2$-step nilpotent Lie algebras over arbitrary fields of characteristic $0$. From the proofs along this paper we develop some algorithms that we have implemented in the computer system Mathematica.\medskip %For small $d$-types, we give the complete list of Lie algebras in this %class.

We start with the model of $2$-step free nilpotent Lie algebra of type $d=\dim \mathfrak{v}$ (here $\mathfrak{v}$ denotes any vector space)$$\mathfrak{n}_{d,2}(\mathfrak{v})=\mathfrak{v}\oplus \Lambda^2\mathfrak{v},$$as it is given in \cite{Gau73}. Then we introduce a family of skewsymmetric matrices  $\{A_1,\dots, A_d\}$ of order $d\times d$ such that:
\begin{enumerate}[\quad a)]\label{skewmatrices}
\item The $i$-th column of $A_i$ is null,
\item For every $j>i$, the $j$-th column of $A_i$ is the additive inverse of the $i$-th column of $A_j$.
\end{enumerate}
Through these families of matrices, it is possible to describe the whole set of invariant forms $B$ of $\mathfrak{n}_{d,2}(\mathfrak{v})$ for which  $\mathfrak{n}_{d,2}(\mathfrak{v})^\perp\subseteq \mathfrak{n}_{d,2}(\mathfrak{v})^2$. According to \cite{BeCoLa17}, any quadratic $2$-step nilpotent Lie algebra appears as an hommomorphic image of $\mathfrak{n}_{d,2}$ of the form $\mathfrak{n}_{d,2}/\ker B$ where $\ker B=\mathfrak{n}_{d,2}(\mathfrak{v})^\perp$.\medskip

The paper is divided into 4 sections from number 2. Section 2 is devoted to the general theory of quadratic Lie algebras. This Section also includes a categorical approach of quadratic nilpotent Lie algebras given by means of bilinear invariant forms of free nilpotent Lie algebras.
This approach allow us to identify the classes of isometric isomorphism in Section 3, where quadratic $2$-step nilpotent algebras are explicitly described by using basis and structure constants. In the final Section 4, examples and computational algorithms are included.\medskip

%%%%%%%%%%LLevar a introducci—n?, explicaci—n de la parte computacional
%por un lado construir las familias de matrices antisim\'etricas $A_1,\dots, A_n$ y la matriz $B(A_1,\dots, A_n)=[B_{1<j}B_{2<j}\dots B_{n-1<j}]$ asociada y, por otro, verificar que el rango de $B(A_1,\dots, A_n)$ sea m\'aximo. Estos algoritmos aparecen descritos en las subsecciones 4.1 y 4.2. En la subsecci\'on 4.3 inclu'mos el algoritmo de generaci\'on de \'algebras mediante bases, productos y forma bilineal asociada. La secci—n final  4.4 ejemplificamos el uso de los algoritmos en los casos $n=5,6,7$.
%%%%%%%%%%%%%%%%%%

%Algorithms are divided into two parts:
%\begin{itemize}
%\item building the skewsymmetric family of matrices $A_1,\dots, A_n$ and the matrix $B(A_1,\dots, A_n)=[A_{1<j}A_{2<j}\dots A_{n-1<j}]$ associated,
%\item and verifying that the rank of $B(A_1,\dots, A_n)$ is maximal. 
%\end{itemize}
%Finally we will show how the computational procedure works step-by-step for $d=5,6$, in order to obtain quadratic Lie algebras.\medskip

%hese algorithms are described in subsections 4.1 and 4.2. In subsection 4.3 we include the algebra generation algorithm which defines bases, products and the bilineal form associated. In the final part, subsection 4.4, we give examples of these algorithms for cases $n=5,6,$ and $7$.

From now on, any vector space  will be of finite dimension over an arbitrary field $\mathbb{K}$ of characteristic $0$. Basics notions and facts on Lie algebras follows from \cite{Ja62} and \cite{Hu72}.

%%%%%%%%%%%%%%%%%%%%%%%%
%%%%%%% SECCION 2: Generalidades y ejemplos
%%%%%%%%%%%%%%%%%%%%%%%%%

\section{Generalities and examples}\label{section2}
A Lie algebra $\mathfrak{n}$ is said to be nilpotent if there is an integer $k$ such that $\mathfrak{n}^{k}=0$, where $\mathfrak{n}^k$ is defined inductively as $\mathfrak{n}^1 =\mathfrak{n}$, $\mathfrak{n}^k=[\mathfrak{n}^{k-1}, \mathfrak{n}]$. We call $t$ the \emph{index of nilpotency} of $\mathfrak{n}$ in case $\mathfrak{n}^{t+1}=0$ and $\mathfrak{n}^t\neq 0$. Then we say that $\mathfrak{n}$ is $t$-nilpotent or also $t$-step nilpotent. 
\medskip

The type of a nilpotent Lie algebra $\mathfrak{n}$ is defined as the codimension of $\mathfrak{n}^2$ in $\mathfrak{n}$. According to Section 1, Corollary 1.3,  in \cite{Gau73}, a set $\mathfrak{m}= \{ x_1, x_2, \dots, x_d \}$ generates $\mathfrak{n}$ if and only if $\{ x_1+\mathfrak{n}^2,\dots x_d+\mathfrak{n}^2\}$ is a basis of $\mathfrak{n}/\mathfrak{n}^2$. So, the type of a Lie algebra is the cardinality of any linear independent set $\mathfrak{m}$ such that $\mathfrak{n}=\mathfrak{t}\oplus \mathfrak{n}^2$, where $\mathfrak{t}$ is the linear span of $\mathfrak{m}$. The above condition implies that the set $\mathfrak{m}$ generates $\mathfrak{n}$ as an algebra. Therefore,  the elements $x_i\in \mathfrak{m}$ can be viewed as a \emph{minimal set of generators} of $\mathfrak{n}$.
\medskip

Given an arbitrary Lie algebra $\mathfrak{g}$ with product $[x,y]$, since $[x,y]=-[y,x]$ the condition \eqref{invariant} that defines an invariant form $\varphi$ on $\mathfrak{g}$ can be rewritten as
\begin{equation}\label{invarianteLie}
\varphi([x,y],z)+\varphi(y,[x,z])=0.
\end{equation}
This is equivalent to
\begin{equation}\label{so}
\inner\, \mathfrak{g}\subseteq \mathfrak{so}(\mathfrak{g},\varphi),
\end{equation}
where $\inner\, \mathfrak{g}=\linearspan\,\langle\ad\, x:x\in \mathfrak{g}\rangle$ is the set of inner derivations of $\mathfrak{g}$ and 
$$\mathfrak{so}(\mathfrak{g},\varphi)=\{f\in \End(\mathfrak{g}): \varphi(f(x),y)+\varphi(x,f(y))=0\}
$$
is the orthogonal Lie algebra attached to the bilinear form $\varphi$.\medskip

A quadratic abelian Lie algebra is just a vector space endowed with a nondegenerate bilinear form. Any semisimple Lie algebra with its Killing form, $B(x,y)=\Tr(\ad\ x\circ ad\ y)$, is quadratic. In fact, the non degeneration of this trace form $B(x,y)$ characterizes (characteristic zero) the class of semisimple Lie algebras according to Cartan's Criterion.  The structure of quadratic algebras is not so clear for non-semisimple and non-abelian Lie algebras.\medskip

Following \cite{TsWa57}, any quadratic Lie algebra $(\mathfrak{g},\varphi)$ satisfies (here $Z(\mathfrak{g})$ denotes the center of $\mathfrak{g}$):
\begin{equation}\label{condicionortogonal}
Z(\mathfrak{g})^\perp=\mathfrak{g}^2\quad and \quad Z(\mathfrak{g})=(\mathfrak{g}^2)^\perp.
\end{equation}
Hence, $\dim \mathfrak{g}=\dim \mathfrak{g}^2+\dim Z(\mathfrak{g})$. The dimension of $Z(\mathfrak{g})\cap\mathfrak{g}^2$ is called the \emph{isotropic index of $\mathfrak{g}$}. The algebra $\mathfrak{g}$ is said to be \emph{reduced} in case $Z(\mathfrak{g})\subseteq \mathfrak{g}^2$, and the pair $(r,s)$, where $r=\dim \mathfrak{g}^2$ and $s=\dim Z(\mathfrak{g})$, is called the (bi-)type of $\mathfrak{g}$.

%\begin{itemize}
%\item $Z(\mathfrak{g})^\perp=\mathfrak{g}^2$ y $Z(\mathfrak{g})=(\mathfrak{g}^2)^\perp$. Hence, $\dim \mathfrak{g}=\dim \mathfrak{g}^2+\dim Z(\mathfrak{g})$.
%\item The dimension of $Z(\mathfrak{g})\cap\mathfrak{g}^2$ is called the isotropic index of $\mathfrak{g}^2$.
%\item $\mathfrak{g}$ is said to be reduced in case $Z(\mathfrak{g})\subseteq \mathfrak{g}^2$, and the pair $(r,s)$, where $r=\dim \mathfrak{g}^2$ and $s=\dim Z(\mathfrak{g})$, is called the (bi-)type of $\mathfrak{g}$.
%\end{itemize}

\begin{lemma}[Tsou and Walker, 1956]\label{TsouWal} Any non reduced and non abelian quadratic Lie algebra $(\mathfrak{g},\varphi)$ decomposes as an orthogonal direct sum of proper ideals, $\mathfrak{g}=\mathfrak{g}_1\oplus \mathfrak{a}$, where $\varphi=\varphi_1\perp \varphi_2$ and $(\mathfrak{g}_1,\varphi_1)$ is a quadratic reduced Lie algebra and $(\mathfrak{a},\varphi_2)$ is a quadratic abelian algebra.
\end{lemma}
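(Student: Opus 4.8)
The plan is to decompose $\mathfrak{g}$ using the center $Z(\mathfrak{g})$ and the orthogonality relations \eqref{condicionortogonal}. Since $\mathfrak{g}$ is not reduced, $Z(\mathfrak{g})\not\subseteq\mathfrak{g}^2$, which means the isotropic part of the center fails to exhaust $Z(\mathfrak{g})$. My first step is to isolate the non-isotropic directions inside the center. Set $\mathfrak{z}=Z(\mathfrak{g})$ and consider the restriction $\varphi|_{\mathfrak{z}\times\mathfrak{z}}$. The radical of this restricted form is exactly $\mathfrak{z}\cap\mathfrak{z}^\perp=Z(\mathfrak{g})\cap\mathfrak{g}^2$ by \eqref{condicionortogonal}, i.e. the isotropic index measures precisely this degeneracy. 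Because $\mathfrak{g}$ is not reduced, $\varphi|_{\mathfrak{z}}$ is not identically degenerate in the sense that $\mathfrak{z}\neq\mathfrak{z}\cap\mathfrak{g}^2$, so there exists a nonzero subspace $\mathfrak{a}\subseteq\mathfrak{z}$ on which $\varphi$ is nondegenerate and which meets $\mathfrak{g}^2$ only in zero.

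**Next I would** build $\mathfrak{a}$ concretely. The idea is to take a complement to the radical $Z(\mathfrak{g})\cap\mathfrak{g}^2$ inside $Z(\mathfrak{g})$ on which $\varphi$ restricts nondegenerately; such a complement exists by standard linear algebra for symmetric bilinear forms over a field of characteristic zero, since one can always split off the radical and diagonalize the resulting nondegenerate form. By construction $\mathfrak{a}\subseteq Z(\mathfrak{g})$, so $\mathfrak{a}$ is abelian and central in $\mathfrak{g}$, and $(\mathfrak{a},\varphi|_{\mathfrak{a}})$ is a quadratic abelian algebra, giving $\varphi_2$. The key payoff is nondegeneracy of $\varphi|_{\mathfrak{a}}$: it forces $\mathfrak{g}=\mathfrak{a}\oplus\mathfrak{a}^\perp$ as an orthogonal direct sum of vector spaces. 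Setting $\mathfrak{g}_1=\mathfrak{a}^\perp$ and $\varphi_1=\varphi|_{\mathfrak{g}_1}$, the nondegeneracy of $\varphi$ on $\mathfrak{g}$ together with that on $\mathfrak{a}$ yields nondegeneracy of $\varphi_1$ on $\mathfrak{g}_1$.

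**Then I must check** the ideal and subalgebra structure. Since $\mathfrak{a}$ is central, $[\mathfrak{a},\mathfrak{g}]=0$, so $\mathfrak{a}$ is an ideal, and for any $x\in\mathfrak{g}_1$, $y\in\mathfrak{g}$ we have $\varphi([x,y],\mathfrak{a})=\varphi(x,[y,\mathfrak{a}])=\varphi(x,0)=0$ by invariance \eqref{invarianteLie}, so $[\mathfrak{g},\mathfrak{g}_1]\subseteq\mathfrak{a}^\perp=\mathfrak{g}_1$; this makes $\mathfrak{g}_1$ an ideal as well. In particular the decomposition $\mathfrak{g}=\mathfrak{g}_1\oplus\mathfrak{a}$ is a direct sum of proper ideals with $[\mathfrak{g}_1,\mathfrak{a}]=0$, hence $\varphi=\varphi_1\perp\varphi_2$ is the claimed orthogonal splitting of the form, and $(\mathfrak{a},\varphi_2)$ is quadratic abelian.

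**The main obstacle I expect** is verifying that $\mathfrak{g}_1$ is reduced, which is the heart of the statement rather than a formality. I would argue that $Z(\mathfrak{g}_1)\subseteq\mathfrak{g}_1^2$ by tracing how the center and derived algebra behave under the splitting: since $\mathfrak{a}$ is central and $[\mathfrak{g}_1,\mathfrak{a}]=0$, one gets $\mathfrak{g}^2=\mathfrak{g}_1^2$, and $Z(\mathfrak{g})=\mathfrak{a}\oplus Z(\mathfrak{g}_1)$ with $Z(\mathfrak{g}_1)=Z(\mathfrak{g})\cap\mathfrak{g}_1$. Then $Z(\mathfrak{g}_1)=Z(\mathfrak{g})\cap\mathfrak{a}^\perp$, and because $\mathfrak{a}$ was chosen as a nondegenerate complement to $Z(\mathfrak{g})\cap\mathfrak{g}^2$ inside $Z(\mathfrak{g})$, intersecting with $\mathfrak{a}^\perp$ leaves exactly $Z(\mathfrak{g})\cap\mathfrak{g}^2=Z(\mathfrak{g})\cap\mathfrak{g}_1^2$, which lies in $\mathfrak{g}_1^2$; this gives $Z(\mathfrak{g}_1)\subseteq\mathfrak{g}_1^2$, i.e. reducedness. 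The delicate point is confirming that this final intersection computation is forced by the nondegeneracy of $\varphi|_{\mathfrak{a}}$ and does not require any further choices; I would verify it carefully using \eqref{condicionortogonal} applied inside $\mathfrak{g}_1$.
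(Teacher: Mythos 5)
Your argument is correct. Note first that the paper itself gives no proof of this lemma; it is quoted from Tsou--Walker \cite{TsWa57}, so there is no in-paper argument to compare against. Your proof is the standard one and all the steps check out: the radical of $\varphi|_{Z(\mathfrak{g})}$ is $Z(\mathfrak{g})\cap Z(\mathfrak{g})^\perp=Z(\mathfrak{g})\cap\mathfrak{g}^2$ by \eqref{condicionortogonal}; any vector-space complement $\mathfrak{a}$ of the radical inside $Z(\mathfrak{g})$ automatically carries a nondegenerate restriction (if $\varphi(w,\mathfrak{a})=0$ for $w\in\mathfrak{a}$ then also $\varphi(w,\mathrm{rad})=0$, so $w$ lies in the radical, hence $w=0$), so no diagonalization or characteristic-zero input is actually needed there; invariance gives $\mathfrak{g}^2\subseteq\mathfrak{a}^\perp=\mathfrak{g}_1$, whence $\mathfrak{g}_1^2=\mathfrak{g}^2$ and $Z(\mathfrak{g}_1)=Z(\mathfrak{g})\cap\mathfrak{g}_1$; and the final computation $Z(\mathfrak{g})\cap\mathfrak{a}^\perp=Z(\mathfrak{g})\cap\mathfrak{g}^2$ is indeed forced by nondegeneracy of $\varphi|_{\mathfrak{a}}$, exactly as you anticipate. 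The only point worth making explicit is where the non-abelian hypothesis enters: it guarantees $\mathfrak{g}^2\neq 0$, hence $\mathfrak{g}_1\supseteq\mathfrak{g}^2$ is nonzero and both summands are proper.
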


From (\ref{condicionortogonal}), it is immediate that Generalized Heisenberg Algebras (GHA) are not quadratic\footnote{A GHA algebra is defined as a Lie algebra $\mathfrak{h}$ such that $\mathfrak{h}^2=Z(\mathfrak{h})=\mathbb{K}\cdot z$.}. The real $5$-dimensional free $3$-step nilpotent Lie algebra (see Subsection 2.1 for a general definition)$$\mathfrak{n}_{2,3}(\mathbb{R}^2)=\mathbb{R}^2\oplus \Lambda^2 \mathbb{R}^2\oplus \frac{\mathbb{R}^2\otimes \Lambda^2 \mathbb{R}^2}{\Lambda^3 \mathbb{R}^2}$$admits two non isometric invariant metrics. In fact, the number of nonisometrically isomorphic quadratic structures on $\mathfrak{n}_{2,3}(\mathbb{K}^2)$ is equal to the cardinality of the quotient $\mathbb{K}^*/(\mathbb{K}^*)^2$, where $\mathbb{K}^*$ is the multiplicative group of the field $\mathbb{K}$. So there is only one quadratic structure on the complex algebra $\mathfrak{n}_{2,3}(\mathbb{C})$ and infinite in case $\mathfrak{n}_{2,3}(\mathbb{Q}^2)$, where $\mathbb{Q}$ is the rational field. In contrast, up to isometries, there is only one indecomposable\footnote{A quadratic algebra $(A,\varphi)$ is decomposable in case $A$ splits into an orthogonal direct sum of (quadratic) ideals. Otherwise, $(A,\varphi)$ is called indecomposable.} and quadratic $2$-step Lie algebra of type 3 over any field of characteristic zero. This algebra is the $6$-dimensional free nilpotent $\mathfrak{n}_{3,2}(\mathbb{K}^3)=\mathbb{K}^3\oplus \Lambda^2 \mathbb{K}^3$ as it is proved in \cite{BeCoLa17}\footnote{In fact, $\mathfrak{n}_{3,2}(\mathbb{K}^3)$ and $\mathfrak{n}_{2,3}\mathbb{K}^2$ are the only free nilpotent Lie algebras that admit a quadratic structure according to \cite{BaOv12}.}.\medskip

The next definition and example are remarkable for the classification of $2$-step nilpotent Lie algebras. The example is included in \cite{O07}. 

\begin{definition}
Let $\mathfrak{v}$ denotes a vector space and $\lambda=\pm 1$. We define the hyperbolic $\lambda$-bilinear form on $\mathfrak{v}$ to be the form $\varphi_{\mathfrak{v}}^\lambda$ on $\mathbb{H}(\mathfrak{v})=\mathfrak{v}\oplus \mathfrak{v}^*$, defined by
\begin{equation}\label{hiperbolica}
\varphi_\mathfrak{v}^\lambda (v_1+f_1,v_2+f_2)=f_1(v_2)+\lambda f_2(v_1),
\end{equation}
for all $v_1,v_2\in \mathfrak{v}$ and $f_1,f_2\in \mathfrak{v}^*$. If $\lambda=1$, the form $\varphi_{\mathfrak{v}}^1$ is symmetric, and if $\lambda=-1$, it is an alternating (skew symmetric) bilinear form. A bilinear form $\psi$ is called a hyperbolic bilinear form if it is isometric to  $\varphi_{\mathfrak{v}}^\lambda$ for some vector space $\mathfrak{v}$ and some $\lambda=\pm 1$.
\end{definition}

\begin{example}\label{alcotangente} Let $(\mathfrak{v},\,\langle\cdot,\,\cdot\rangle,\,\rho)$ be the a triple where $(\mathfrak{v},\,\langle\cdot,\,\cdot\rangle)$ is a metric vector space over $\mathbb{R}$ (so $\langle\cdot,\,\cdot\rangle$ is an inner product on $\mathfrak{v}$) and $\rho\colon \mathfrak{v}\to\mathfrak{so}(\mathfrak{v})$ denotes an injective linear map that satisfies condition~\eqref{ro}. For every $u,v\in \mathfrak{v}$, let $f_{u,v}: \mathfrak{v} \to \mathbb{R}$ define as$$f_{u,v}(w)=\langle\rho(w)(u), v\rangle=-\langle\rho(u)(w), v\rangle)$$for all $w\in \mathfrak{v}$. Consider now the vector space $\mathfrak{n}(\mathfrak{v},\rho)=\mathfrak{v}\oplus\mathfrak{v}^*$ endowed with the canonical hyperbolic metric
$$
\varphi_\mathfrak{v}^1 (v_1+f_1,v_2+f_2)=f_1(v_2)+f_2(v_1).
$$In $\mathfrak{n}(\mathfrak{v},\rho)$ we define the bracket product
$$
[u+g,v+h]=[u,v]=f_{u,v},
$$for $u,v\in \mathfrak{v}$ and $g,h\in \mathfrak{v}^*$. Since $\rho(a)\in \mathfrak{so}(\mathfrak{v})$, the product $[\cdot,\,\cdot]$ is skewsymmetric. It is also clear that $[[u,v],w]=0$. Hence, $\mathfrak{n}(\mathfrak{v},\rho)$ is a $2$-step Lie algebra. From condition \eqref{ro} of $\rho$ we get that $\varphi_\mathfrak{v}^1$ is an invariant bilinear form. Therefore $(\mathfrak{n}(\mathfrak{v},\rho), \varphi_\mathfrak{v}^1)$ is a quadratic $2$-step Lie algebra. In fact $\mathfrak{n}(\mathfrak{v},\rho)^2=\mathfrak{v}^*=Z(\mathfrak{n}(\mathfrak{v},\rho))$ because of $\rho$ is injective. So the Lie algebra  $\mathfrak{n}(\mathfrak{v},\rho)$ is reduced.
\end{example}

Theorem 3.2 in \cite{O07} establishes that for every integer $m\geq 1$, if we provide $\mathbb{R}^m$ (consider as an abelian Lie algebra) with a metric $\Phi_m$, the real quadratic $2$-step nilpotent Lie algebras (up to isometries) are of the form $(\mathbb{R}^m\oplus \mathfrak{n}(\mathfrak{v}, \rho),\Phi_m\perp \varphi_\mathfrak{v}^1)$.

%%%%%%%%%%%%%%%%%%%
%Subdsection 2.1
%%%%%%%%%%%%%%%%%%%%

\subsection{Free nilpotent Lie algebras vs quadratic nilpotent Lie algebras}
%\subsection*{{\bf 2.1} \emph{Free nilpotent Lie algebras vs quadratic nilpotentLie algebras.}}
%\medskip

Let $\mathfrak F \mathfrak L (d)$ be the free Lie algebra on a set of $d$ generators over the field $\mathbb{K}$. The free $t$-nilpotent Lie algebra on $d$ generators is denoted $\mathfrak{n}_{d,t}$ and defined as the quotient algebra
\begin{equation}\label{feedefinicion}
\mathfrak{n}_{d,t}= \mathfrak F \mathfrak L (d) / \mathfrak F \mathfrak L (d)^{t+1}.
\end{equation}
Any $t$-nilpotent Lie algebra $\mathfrak{n}$ of type $d$ over $\mathbb{K}$ is an homomorphic image of $\mathfrak{n}_{d,t}$. According to Proposition 1.4 and Proposition 1.5 in \cite{Gau73}, $\mathfrak{n}$ is isomorphic to the factor algebra $\mathfrak{n}_{d,t}/I$, where $I$ is an ideal of $\mathfrak{n}_{d,t}$ such that $I\subseteq\mathfrak{n}_{d,t}^2$ and $\mathfrak{n}_{d,t}^t\not\subseteq I$. (See \cite{GraGro90} for models of free nilpotent Lie algebras.)
\medskip

In \cite{BeCoLa17} it is introduced a new technique for constructing quadratic nilpotent Lie algebras out of invariant symmetric bilinear forms on free nilpotent Lie algebras. In the sequel we summarize briefly some of the results we will use along this paper. The next lemma follows from Proposition 4.1 in \cite{BeCoLa17}.

\begin{lemma}\label{reduccion}Let $\mathfrak{n}$ be the factor Lie algebra $\displaystyle{\dfrac{\mathfrak{n}_{d,t}}{I}}$ where $I$ is an ideal of $\mathfrak{n}_{d,t}$ such that $\mathfrak{n}_{d,t}^t \not\subseteq I \subseteq \mathfrak{n}_{d,t}^2$. Then, there exists a symmetric, invariant and non-degenerate bilinear form $\varphi$ on  $\mathfrak{n}$ if and only if  there exists a symmetric, invariant and non-degenerate bilinear form $\psi$ on $\mathfrak{n}_{d,t}$ such that $I=\mathfrak{n}_{d,t}^\perp$. The relation between $\varphi$ and $\psi$ is given by $\psi(a,b)=\varphi(a+I, b+I)$ for all $a,b\in \mathfrak{n}_{d,t}$.
\end{lemma}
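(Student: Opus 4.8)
The plan is to exhibit $\psi$ and $\varphi$ as pullback and descent of one another along the canonical projection $\pi\colon \mathfrak{n}_{d,t}\to\mathfrak{n}=\mathfrak{n}_{d,t}/I$, and to check that non-degeneracy on the quotient corresponds exactly to the radical of $\psi$ being $I$. Throughout write $L=\mathfrak{n}_{d,t}$, recall that $\pi$ is a surjective Lie homomorphism (so $\pi([a,b])=[\pi a,\pi b]$), and let $L^\perp=\{a\in L:\psi(a,b)=0\text{ for all }b\in L\}$ denote the radical of $\psi$; the content of the stated condition $I=\mathfrak{n}_{d,t}^\perp$ is precisely $L^\perp=I$.

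For the implication $(\Leftarrow)$ I would start from a symmetric invariant $\psi$ with $L^\perp=I$ and \emph{descend} it by setting $\varphi(a+I,b+I)=\psi(a,b)$. Well-definedness uses the inclusion $I\subseteq L^\perp$: altering the representatives by $x,y\in I$ changes the value by $\psi(x,b)+\psi(a,y)+\psi(x,y)=0$. Symmetry is immediate, and invariance transfers because $\pi$ is a homomorphism, so that $\varphi([\pi a,\pi b],\pi c)+\varphi(\pi b,[\pi a,\pi c])=\psi([a,b],c)+\psi(b,[a,c])=0$ by \eqref{invarianteLie} for $\psi$. Non-degeneracy of $\varphi$ uses the reverse inclusion $L^\perp\subseteq I$: if $\varphi(a+I,\,\cdot\,)$ vanishes on $\mathfrak{n}$, then by surjectivity of $\pi$ the form $\psi(a,\,\cdot\,)$ vanishes on $L$, whence $a\in L^\perp=I$ and $a+I=0$.

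For $(\Rightarrow)$ I would go the other way and \emph{pull back} a quadratic $\varphi$ on $\mathfrak{n}$ by $\psi(a,b)=\varphi(\pi a,\pi b)$. Symmetry and invariance transfer exactly as before, again using that $\pi$ is a homomorphism together with \eqref{invarianteLie}. The one genuine computation is the radical: $a\in L^\perp$ iff $\varphi(\pi a,\pi b)=0$ for all $b\in L$, iff $\varphi(\pi a,y)=0$ for all $y\in\mathfrak{n}$ (surjectivity of $\pi$), iff $\pi a=0$ (non-degeneracy of $\varphi$), iff $a\in I$; thus $L^\perp=I$, and the displayed relation $\psi(a,b)=\varphi(a+I,b+I)$ holds by construction.

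All steps are elementary linear algebra, so the place I would watch most carefully is the radical identification, since that is the only point where both the non-degeneracy of $\varphi$ and the surjectivity of $\pi$ are needed, and it is what pins $L^\perp$ down to $I$ rather than to a possibly larger ideal. As a consistency check I would record that the radical of an invariant form is automatically an ideal: for $x\in L^\perp$ and $y\in L$, invariance \eqref{invarianteLie} gives $\psi([y,x],z)=-\psi(x,[y,z])=0$ for all $z$, so $[x,y]=-[y,x]\in L^\perp$; this matches the hypothesis that $I$ be an ideal. Finally, the conditions $\mathfrak{n}_{d,t}^t\not\subseteq I\subseteq\mathfrak{n}_{d,t}^2$ do not enter the equivalence of forms; by Gauger's propositions they only guarantee that $\mathfrak{n}$ is a genuine $t$-step nilpotent Lie algebra of type $d$.
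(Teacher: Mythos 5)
Your proof is correct and complete: the paper itself gives no argument for this lemma (it is quoted from Proposition 4.1 of \cite{BeCoLa17}), and your pullback/descent construction along the projection $\pi$, with the radical identification $L^\perp=I$ carrying the non-degeneracy back and forth, is exactly the standard argument that reference supplies; your closing observations (that the radical of an invariant form is an ideal, and that the constraints $\mathfrak{n}_{d,t}^t\not\subseteq I\subseteq\mathfrak{n}_{d,t}^2$ play no role in the form correspondence) are also accurate. The only point worth flagging is that you have, correctly, read the hypothesis on $\psi$ as ``symmetric and invariant with radical exactly $I$'' rather than literally ``non-degenerate'' (which would force $I=0$); this is the reading consistent with the paper's subsequent definition of the category ${\bf Sym_0(d,t)}$, where $\ker\psi=\mathfrak{n}_{d,t}^\perp$ is allowed to be nonzero.
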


Previous lemma reduces the classification of quadratic nilpotent Lie algebras to that of invariant bilinear forms on free nilpotent Lie algebras. Moreaver we can establish the following categorical approach:
\begin{itemize}
\item $\bf {NilpQuad_{d,t}}$ stands for the category whose objects are the quadratic $t$-nilpotent Lie algebras $(n,\,\varphi)$ of type $d$. The morphisms $f\colon(n,\,\varphi)\to (n',\,\varphi')$ are the \emph{isometric Lie homomorphisms}, i.e.: $$f([x,y])=[f(x),f(y)] \quad\text{and}\quad \varphi(x,y)=\varphi'(f(x), f(y)).$$ 

\item $\bf Sym_0(d,t)$ is the category whose objects are the symmetric invariant bilinear forms $\psi$ on $\mathfrak{n}_{d,t}$ for which $\ker  \psi \subseteq \mathfrak{n}_{d,t}^2$ and $\mathfrak{n}_{d,t}^t\nsubseteq \ker  \psi $. The  morphisms are isometric Lie homomorphisms of $\mathfrak{n}_{d,t}$ moduli the relation of equivalence
%\item $\bf {Q_{d,t}} \colon{ \bf Sym_0(d,t)} \to {\bf):
\begin{equation}\label{equivalencia}
f_1\sim f_2 \Longleftrightarrow (f_1-f_2)(\mathfrak{n}_{d,t})\subseteq \ker(\psi_2),
\end{equation}
where $f_i\colon(\mathfrak{n}_{d,t},\psi_1)\to(\mathfrak{n}_{d,t},\psi_2)$ for $i=1,2$.
\item ${\bf Q_{d,t}} \colon{ \bf Sym_0(d,t)} \to {\bf NilpQuad_{d,t}}$ is the functor that associates to each object $\psi$ in the category $\bf Sym_0(d,t)$, the object in $\bf {NilpQuad_{d,t}}$:
\begin{equation}
{\bf Q_{d,t}}(\psi) = (\mathfrak{n}_{d,t}/\ker  \psi, \varphi),\quad \varphi(a+\ker  \psi, b+\ker  \psi)= \psi(a,b).
\end{equation}

\end{itemize} 

In \cite{BeCoLa17} it is proved that the functor $\bf Q_{d,t}$ provides an equivalence between the categories $\bf Sym_0(d,t)$ and $\bf NilpQuad_{d,t}$. Moreover, there is a natural action of $\aut\, \mathfrak{n}_{d,t}$, the group of authomorphism of $\mathfrak{n}_{d,t}$, on the set of objects of the category $\bf Sym _0(d,t)$,
\begin{equation}\label{accion}
\aut\, \mathfrak{n}_{d,t}\times \obj ({\bf Sym _0(d,t)})\to \obj ({\bf Sym _0(d,t)}), \quad (\theta, \psi)\mapsto \psi_\theta
\end{equation}
where $\psi_\theta (x,y)=\psi(\theta(x),\theta(y))$. Using the functor $\bf Q_{d,t}$ and the action introduced in (\ref{accion}), from Corollary 4.3 and Lemma 4.4 in \cite{BeCoLa17}) we get the next lemma.

\begin{lemma}\label{equivalentes}
For all $\psi_1, \psi_2 \in \obj ({\bf Sym _0(d,t)})$, the following assertions are equivalent:
\begin{enumerate}[\quad (i)]
\item $\psi_1$ and  $ \psi_2$ are isomorphic objects in the category ${\bf Sym _0(d,t)}$.
\item ${\bf Q_{d,t}}(\psi_1)$ and ${\bf Q_{d,t}}(\psi_2)$ are isometrically isomorphic Lie algebras.
\item There exists an isometric automorphism $\theta\colon(\mathfrak{n}_{d,t},\psi_1)\to (\mathfrak{n}_{d,t},\psi_2)$.
\end{enumerate}
\end{lemma}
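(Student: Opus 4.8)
The plan is to prove (i)$\Leftrightarrow$(ii) as a purely categorical consequence of $\mathbf{Q}_{d,t}$ being an equivalence, and then to close the cycle by establishing (ii)$\Leftrightarrow$(iii) through an explicit lifting argument between $\mathfrak{n}_{d,t}$ and the quadratic quotients $\mathbf{Q}_{d,t}(\psi_i)$. Throughout I write $\pi_i\colon \mathfrak{n}_{d,t}\to \mathfrak{n}_{d,t}/\ker \psi_i$ for the canonical projection and $\varphi_i$ for the nondegenerate form on the quotient, so that $\psi_i(a,b)=\varphi_i(\pi_i(a),\pi_i(b))$ and $\ker\psi_i=\ker\pi_i$.

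For (i)$\Leftrightarrow$(ii) I would invoke only the fact, recalled above, that $\mathbf{Q}_{d,t}$ is an equivalence of categories. Being an equivalence, the functor is fully faithful and essentially surjective, and hence it both preserves and reflects isomorphisms; therefore $\psi_1$ and $\psi_2$ are isomorphic objects of $\mathbf{Sym}_0(d,t)$ if and only if $\mathbf{Q}_{d,t}(\psi_1)$ and $\mathbf{Q}_{d,t}(\psi_2)$ are isomorphic in $\mathbf{NilpQuad}_{d,t}$. Since the morphisms of the latter category are the isometric Lie homomorphisms, an isomorphism there is exactly an isometric Lie isomorphism, which is the content of (ii).

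The implication (iii)$\Rightarrow$(ii) is immediate: an isometric automorphism $\theta$ with $\psi_1(x,y)=\psi_2(\theta(x),\theta(y))$ is bijective, so $x\in\ker\psi_1$ forces $\psi_2(\theta(x),\theta(y))=0$ for all $y$ and hence $\theta(x)\in\ker\psi_2$; bijectivity upgrades this to $\theta(\ker\psi_1)=\ker\psi_2$, so $\theta$ descends to an isometric Lie isomorphism of the quotients. For the converse (ii)$\Rightarrow$(iii) I start from an isometric Lie isomorphism $\bar\theta\colon \mathbf{Q}_{d,t}(\psi_1)\to \mathbf{Q}_{d,t}(\psi_2)$. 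Because $\ker\psi_i\subseteq \mathfrak{n}_{d,t}^2$, each $\pi_i$ identifies $\mathfrak{n}_{d,t}/\mathfrak{n}_{d,t}^2$ with the abelianization of the quotient, so $\bar\theta$ carries a minimal set of generators of $\mathbf{Q}_{d,t}(\psi_1)$ to a minimal set of generators of $\mathbf{Q}_{d,t}(\psi_2)$. Lifting these images along $\pi_2$ and using the universal property of the free nilpotent algebra, I extend the resulting assignment on the generators $x_1,\dots,x_d$ to a Lie endomorphism $\theta$ of $\mathfrak{n}_{d,t}$ satisfying $\pi_2\circ\theta=\bar\theta\circ\pi_1$. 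As $\bar\theta$ is invertible modulo squares, $\theta$ induces an invertible map on $\mathfrak{n}_{d,t}/\mathfrak{n}_{d,t}^2$, and by nilpotency a Lie endomorphism is an automorphism precisely when it is bijective there; hence $\theta\in\aut\mathfrak{n}_{d,t}$. Finally the chain $\psi_1(x,y)=\varphi_1(\pi_1 x,\pi_1 y)=\varphi_2(\bar\theta\pi_1 x,\bar\theta\pi_1 y)=\varphi_2(\pi_2\theta x,\pi_2\theta y)=\psi_2(\theta x,\theta y)$ shows $\theta$ is the desired isometric automorphism, i.e.\ $\psi_1=(\psi_2)_\theta$ in the notation of \eqref{accion}.

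The only genuinely delicate step is the lift in (ii)$\Rightarrow$(iii): one must promote an isomorphism that exists only after collapsing the radicals $\ker\psi_i$ to an honest bijective automorphism of $\mathfrak{n}_{d,t}$. The three facts that make this work are the universal property of $\mathfrak{n}_{d,t}$ (any map of generators extends to a Lie homomorphism), the criterion that such an endomorphism is an automorphism exactly when it is bijective on $\mathfrak{n}_{d,t}/\mathfrak{n}_{d,t}^2$, and the standing hypothesis $\ker\psi_i\subseteq \mathfrak{n}_{d,t}^2$, which guarantees that passing to the quotient does not disturb the abelianization and so keeps $\bar\theta$ invertible at the level of generators. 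Everything else is formal bookkeeping with the identity $\psi_i=\varphi_i\circ(\pi_i\times\pi_i)$.
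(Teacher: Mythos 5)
Your proposal is correct, and it follows essentially the same route the paper takes: the paper derives this lemma directly from the fact that $\mathbf{Q_{d,t}}$ is an equivalence of categories together with the cited Corollary 4.3 and Lemma 4.4 of \cite{BeCoLa17}, without writing out the details. Your argument simply fills in what those citations delegate --- (i)$\Leftrightarrow$(ii) is the standard fact that an equivalence preserves and reflects isomorphisms, and your lifting argument for (ii)$\Rightarrow$(iii) (universal property of $\mathfrak{n}_{d,t}$, invertibility detected on $\mathfrak{n}_{d,t}/\mathfrak{n}_{d,t}^2$, and $\ker\psi_i\subseteq\mathfrak{n}_{d,t}^2$) is exactly the content of the referenced lemma.
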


\begin{lemma}\label{isomorfismo} For all $\psi\in \obj ({\bf Sym _0(d,t)})$, the set $Orb_{\aut\, \mathfrak{n}_{d,t}}(\psi)=\{\psi_\theta: \theta\in \aut\, \mathfrak{n}_{d,t}\}$ equals to the set of bilinear invariant symmetric forms that are isomorphic to $\psi$ in the category ${\bf Sym _0(d,t)}$. Therefore the number of orbits of the action given in \emph{(\ref{accion})} is exactly the number of isometric isomorphism classes of quadratic $t$-nilpotent Lie algebras of type $d$.
\end{lemma}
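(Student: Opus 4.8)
The plan is to establish Lemma~\ref{isomorfismo} by assembling the equivalences already available in Lemma~\ref{equivalentes} together with the group action defined in~\eqref{accion}. The statement has two parts: first, that the orbit $Orb_{\aut\, \mathfrak{n}_{d,t}}(\psi)$ coincides with the isomorphism class of $\psi$ in $\bf Sym_0(d,t)$; and second, the counting consequence that the number of orbits equals the number of isometric isomorphism classes of quadratic $t$-nilpotent Lie algebras of type $d$. I would treat these in order, since the second follows from the first together with the functorial equivalence cited from \cite{BeCoLa17}.

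For the first part, I would prove the set equality by double inclusion. For the inclusion $Orb_{\aut\, \mathfrak{n}_{d,t}}(\psi)\subseteq [\psi]$, take any $\psi_\theta$ with $\theta\in\aut\,\mathfrak{n}_{d,t}$; I must check that $\psi_\theta$ is again an object of $\bf Sym_0(d,t)$ (that it is symmetric, invariant, and satisfies $\ker\psi_\theta\subseteq\mathfrak{n}_{d,t}^2$ with $\mathfrak{n}_{d,t}^t\nsubseteq\ker\psi_\theta$), and then that $\theta$ itself furnishes an isometric automorphism $(\mathfrak{n}_{d,t},\psi_\theta)\to(\mathfrak{n}_{d,t},\psi)$. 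By the definition $\psi_\theta(x,y)=\psi(\theta(x),\theta(y))$, the map $\theta$ is by construction an isometry between these two forms, and since $\theta$ is an automorphism it is a Lie homomorphism; so condition (iii) of Lemma~\ref{equivalentes} holds and hence $\psi_\theta\cong\psi$. Conversely, for $[\psi]\subseteq Orb_{\aut\,\mathfrak{n}_{d,t}}(\psi)$, suppose $\psi'\cong\psi$ in $\bf Sym_0(d,t)$; by Lemma~\ref{equivalentes}(iii) there is an isometric automorphism $\theta\colon(\mathfrak{n}_{d,t},\psi')\to(\mathfrak{n}_{d,t},\psi)$, which unwinds precisely to $\psi'(x,y)=\psi(\theta(x),\theta(y))=\psi_\theta(x,y)$, so $\psi'=\psi_\theta\in Orb_{\aut\,\mathfrak{n}_{d,t}}(\psi)$.

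For the counting statement, I would argue that the orbit decomposition of $\obj(\bf Sym_0(d,t))$ under the action~\eqref{accion} partitions the objects into the isomorphism classes of the category, by the set equality just proved. The functor $\bf Q_{d,t}$ is an equivalence of categories between $\bf Sym_0(d,t)$ and $\bf NilpQuad_{d,t}$ (the result cited from \cite{BeCoLa17}), so it induces a bijection between isomorphism classes of objects on each side; combined with Lemma~\ref{equivalentes}(i)$\Leftrightarrow$(ii), two objects $\psi_1,\psi_2$ lie in the same orbit if and only if ${\bf Q_{d,t}}(\psi_1)$ and ${\bf Q_{d,t}}(\psi_2)$ are isometrically isomorphic. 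Hence orbits are in bijection with isometric isomorphism classes of quadratic $t$-nilpotent Lie algebras of type $d$, and the counts agree.

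The main obstacle I anticipate is the well-definedness check at the start of the first inclusion: verifying that $\psi_\theta$ genuinely lands in $\bf Sym_0(d,t)$ rather than merely being a symmetric invariant form. The symmetry and invariance of $\psi_\theta$ transfer routinely because $\theta$ respects the bracket, and the kernel condition follows from $\ker\psi_\theta=\theta^{-1}(\ker\psi)$ together with the fact that an automorphism preserves the derived series and the terms $\mathfrak{n}_{d,t}^k$; but making this transfer precise, especially that $\mathfrak{n}_{d,t}^t\nsubseteq\ker\psi_\theta$ is preserved, is where the care is needed. Everything else is a formal consequence of Lemma~\ref{equivalentes} and the cited categorical equivalence, so beyond this verification the proof is essentially bookkeeping.
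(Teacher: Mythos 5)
Your proposal is correct, and it follows essentially the route the paper intends: the paper states Lemma~\ref{isomorfismo} without a written proof, presenting it as a direct consequence of Lemma~\ref{equivalentes} and the categorical equivalence imported from \cite{BeCoLa17}, which is exactly what you unpack via the double inclusion and the orbit--isomorphism-class correspondence. Your added care about $\psi_\theta$ actually landing in $\obj({\bf Sym_0(d,t)})$ (via $\ker\psi_\theta=\theta^{-1}(\ker\psi)$ and invariance of the terms $\mathfrak{n}_{d,t}^k$ under automorphisms) is the right well-definedness check and is consistent with the paper's setup of the action in~(\ref{accion}).
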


In the following section we recombine the ideas and results in \cite{O07} and \cite{BeCoLa17} to get an explicit classification of quadratic $2$-step nilpotent Lie algebras.

%%%%%%%%%%%%%%%%%%%%%%%%
%%%%%%% SECCION 3: ConstrucciÑn de 2-step y algoritmo
%%%%%%%%%%%%%%%%%%%%%%%%%

\section{ Two step quadratic Lie algebras}

%(see p. 20, The algebraic and geometric Theory of quadratics forms, Elman R. and al.)

According to \cite{ElKaMe08}, a nondegenerate bilinear form $\varphi:\mathfrak{v}\times \mathfrak{v}\to \mathbb{K}$ is called metabolic if there is a totally isotropic subspace $\mathfrak{w}$ of $\mathfrak{v}$ of dimension equal to $\frac{1}{2}\dim\mathfrak{v}$. In characteristic different from $2$, metabolic and hyperbolic forms coincide as it is proved in Corollary 1.25 in \cite{ElKaMe08}.

For a given quadratic and reduced $2$-step nilpotent Lie algebra $(\mathfrak{n}, \varphi)$ we have that $Z(\mathfrak{n})=\mathfrak{n}^2=(\mathfrak{n}^2)^\perp$ because of (\ref{condicionortogonal}). Therefore  $\mathfrak{n}^2$ is a totally isotropic subspace of $\mathfrak{n}$ of dimension 
$\frac{1}{2}\dim\mathfrak{n}$. It follows that any nondegenerate invariant bilinear form attached to a quadratic and reduced $2$-step nilpotent Lie algebra is metabolic. Over the real field $\mathbb{R}$ metabolic and hyperbolic metrics coincides. This is the key in \cite{O07} to establish that the class of quadratic and reduced real $2$-step nilpotent Lie algebras are the algebras  given in Example \ref{alcotangente}.
%So, for a given basis $\{z_1,z_2,\dots, z_d\}$ of $\mathfrak{n}^2$, there exists a linearly independent set $\{v_1,v_2,\dots, v_d\}$ of orthogonal vectors such that $\varphi(z_i,v_j)=\delta_{ij}$.

%This feature of metabolic bilinear forms is the key in \cite{O07} to establish that the class of quadratic and reduced real $2$-step nilpotent Lie algebras are the algebras  given in Example \ref{alcotangente}. In this way, in \cite{O07} this class of algebras are described by triples $(\mathbb{R}^d,\,\langle\cdot,\,\cdot\rangle,\,\rho)$ where $\rho\colon\mathbb{R}^d\to \mathfrak{so}(\mathbb{R}^d,\,\langle\cdot,\,\cdot\rangle)$ where the map $rho$ satisfies $\rho(u)(v)+\rho(v)(u)=0$.\medskip

Let $(\mathfrak{n}, \varphi)$ be a quadratic and reduced $2$-step nilpotent Lie algebra of type $d$. Since $\varphi$ is metabolic, for a given basis $\{z_1,z_2,\dots, z_d\}$ of $\mathfrak{n}^2$, there exists a set $\{v_1,v_2,\dots, v_d\}$ of orthogonal vectors such that $\varphi(z_i,v_j)=\delta_{ij}$. Therefore, $\mathcal{B}=\{v_1,v_2,\dots, v_d, z_1,z_2,\dots, z_d\}$ is an ordered basis of $\mathfrak{n}$ and  $\varphi$, in the basis $\mathcal{B}$, is determined by the the matrix:
\begin{equation}\label{hyperbolicmatrix}
  B_0=\left(
\begin{array}{cc}
\textbf{0}_{d\times d} & \mathbf{I}_{d\times d} \\
\mathbf{I}_{d\times d}& \mathbf{0}_{d\times d} \\
\end{array}
\right).
\end{equation}

On the other hand, $\varphi$ is related to some invariant form of the $2$-step free nilpotent algebra $\mathfrak{n}_{d,2}$. From \cite{Gau73}, we will use as model of $\mathfrak{n}_{d,2}$
\begin{equation}\label{model2step}
\mathfrak{n}_{d,2}(\mathbb{K}^d)=\mathbb{K}^d\oplus\Lambda^2\mathbb{K}^d, \quad [u_1+v_1\wedge w_1,u_2+v_2\wedge w_2]=u_1\wedge u_2.
\end{equation}
If we take a basis $\{u_1,\dots,u_d\}$ of $\mathbb{K}^d$, the set $\{u_i, [u_i,u_j]=u_i\wedge u_j:1\leq i\leq d,i<j\}$ is a basis of $\mathfrak{n}_{d,2}(\mathbb{K}^d)$ called  \emph{Hall basis} (see \cite{Ha50} for a complete definition).

\begin{definition}\label{dcuadratica} For any $d\geq 2$, a family $\{A_1,\dots,A_d\}$ of matrices of order $d\times d$ is called $d$-quadratic if the following properties are satisfied:
\begin{enumerate}[\quad (1)]
\item Every matrix $A_i$ is skewsymmetric ($A_i^t=-A_i$, where $A_i^t$ is the transpose matrix of $A_i$).
\item The $i$-th column of every $A_i$ is null.
\item For any $j>i$, the $j$-th column of $A_i$ is the additive inverse of the $i$-th column of $A_j$.
\item If $B_{i<j}$ denotes the submatrix of $A_i$ given by the set of all $j$-th columms of $A_i$ such that $i<j$, the  matrix
\begin{equation}\label{Qq}
    B(A_1,\dots,A_{d})=[B_{1<j}B_{2<j}\dots B_{d-1<j}],
\end{equation}
of order $d\times \frac{d(d-1)}{2}$, has maximal rank $d$. 
\end{enumerate}
\end{definition}

\begin{theorem}\label{clasifcuadraticas} For any $d\geq 2$,  the following assertions are equivalent:
\begin{enumerate}[\quad (i)]
\item $(\mathfrak{n},\varphi)$ is a quadratic and reduced $2$-step nilpotent Lie algebra of type $d$.
\item There exists a basis  $\mathcal{B}=\{v_1,\dots,v_d,z_1,\dots, z_d\}$ of $\mathfrak{n}$ for which the structure constants are determined by a family of $d$-quadratic matrices $\{A_{i}=(a^i_{kj}): 1\leq i\leq d\}$ in the following way: $[z_i,\mathfrak{n}]=0$ and $[v_i,v_j]=\sum_{k=1}^da^i_{kj}z_k$. Moreover, the bilinear form $\varphi$ is hyperbolic and, in the ordered basis $\mathcal{B}$, $\varphi$ is given by the the matrix $B_0$ described in \emph{(\ref{hyperbolicmatrix})}.
%\begin{equation}\label{hyperbolicmatrix}
%  \left(
%\begin{array}{cc}
%\textbf{0}_{d\times d} & \mathbf{I}_{d\times d} \\
%\mathbf{I}_{d\times d}& \mathbf{0}_{d\times d} \\
%\end{array}
%\right).
%\end{equation}

%\item $B\equiv B(A_1,\dots,A_{d-1})$ es una objeto de la categor\'ia ${\bf Sym _0(d,2)}$ y $\mathfrak{n}$ es isom\'etrica al \'algebra cuadr\'atica ${\bf Q_{d,2}}(B) = (\mathfrak{n}_{d,2}(\mathbb{K})/Ker (B), \overline {B})$.

%Alternativo a ii): Existe una base $\{v_1,\dots,v_n,z_1,\dots, z_n\}$ de $\mathfrak{n}$ de modo la tabla de productos de $\mathfrak{n}$ est\'a determinada por por una familia de $d$ matrices $A_{i}=(a(i)_{kj})$ antisim\'etricas de orden $d\times d$ que satisfacen las condiciones 1), 2) y 3) en la forma siguiente: $[z_i,\mathfrak{n}]=0$ y las expresiones $[v_i,v_j]=\sum_{k=1}^da^i_{kj}z_j$.
\end{enumerate}

%\begin{equation}\label{matrizformabilineal}
%B(A_1,\dots,A_{d-1})=\left(
%\textbf{0}_{d\times d} &\left[B_{1<j}B_{2<j}\dots B_{d-1<j}\right]\\
%\left[B_{1<j}B_{2<j}\dots B_{d-1<j}\right]^t& \mathbf{0}_{\frac{d(d-3)}{2}\times \frac{d(d-3)}{2}}\\
%\end{array}
%\right).
%\end{equation}
\end{theorem}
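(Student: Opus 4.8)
The plan is to prove the two implications separately, using the totally alternating trilinear form $T(x,y,z)=\varphi([x,y],z)$ as the organizing device. Before either direction I would record two preliminaries. First, a dimension count: for a reduced $2$-step algebra the inclusion $\mathfrak{n}^2\subseteq Z(\mathfrak{n})$ (valid for any $2$-step algebra) together with the reduced hypothesis $Z(\mathfrak{n})\subseteq\mathfrak{n}^2$ forces $Z(\mathfrak{n})=\mathfrak{n}^2$; feeding this into (\ref{condicionortogonal}) gives $\dim\mathfrak{n}=2\dim\mathfrak{n}^2$, and since the type is $d$ we obtain $\dim\mathfrak{n}^2=d$ and $\dim\mathfrak{n}=2d$. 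Second, $T$ is alternating: it is skew in its first two slots because $[x,y]=-[y,x]$, while invariance (\ref{invariant}) combined with the symmetry of $\varphi$ yields the cyclic identity $\varphi([x,y],z)=\varphi(x,[y,z])=\varphi([y,z],x)$; a transposition and a $3$-cycle generate the full symmetric group on three letters, so $T$ is totally antisymmetric.

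For (i)$\Rightarrow$(ii) I would start from the hyperbolic basis already produced in the discussion preceding the statement: a basis $\{z_1,\dots,z_d\}$ of $\mathfrak{n}^2$ and its metabolic dual $\{v_1,\dots,v_d\}$ with $\varphi(z_i,v_j)=\delta_{ij}$ and $\varphi(v_i,v_j)=0$, so that $\varphi$ is $B_0$ and $[z_i,\mathfrak{n}]=0$ because $z_i\in\mathfrak{n}^2=Z(\mathfrak{n})$. Writing $[v_i,v_j]=\sum_k a^i_{kj}z_k$ defines the matrices $A_i=(a^i_{kj})$, and pairing against $v_l$ identifies the entries with the form, $a^i_{lj}=T(v_i,v_j,v_l)$. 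The relations (1), (2), (3) are then exactly the total antisymmetry of $T$ read off in the appropriate slots: swapping the last two arguments of $T$ gives skewsymmetry of $A_i$, setting the first two arguments equal gives the null $i$-th column, and swapping $v_i$ with $v_j$ gives the column-inversion relation. Finally, the columns of $B(A_1,\dots,A_d)$ are precisely the vectors $[v_i,v_j]$ for $i<j$ written in the basis $\{z_k\}$; they span $\mathfrak{n}^2$, which has dimension $d$, so $B$ has maximal rank $d$, establishing (4).

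For (ii)$\Rightarrow$(i) I would define $\mathfrak{n}$ on a $2d$-dimensional space with the prescribed brackets and the form $B_0$, and verify the axioms one at a time. Antisymmetry of the bracket combines (2) (for $i=j$) with (3) (for $i\neq j$), and the Jacobi identity is automatic because every $[v_i,v_j]$ lies in the centralizing span of the $z_k$; hence $\mathfrak{n}$ is $2$-step. The form $B_0$ is symmetric and nondegenerate by inspection, and for invariance the only nontrivial case is three $v$'s, where the required identity $a^i_{lj}=a^j_{il}$ follows by applying (3) and then the skewsymmetry (1), while every case involving a central $z_k$ collapses since $\mathfrak{n}^2$ is $B_0$-isotropic. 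Condition (4) says the columns $[v_i,v_j]$ span $\linearspan\langle z_1,\dots,z_d\rangle$, so $\dim\mathfrak{n}^2=d$ and the type is $2d-d=d$.

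The step I expect to be the main obstacle is matching condition (4) with reduced-ness, that is, upgrading $\dim\mathfrak{n}^2=d$ to $Z(\mathfrak{n})=\mathfrak{n}^2$. Here I would reinterpret the structure constants as a single totally antisymmetric tensor $S_{ijk}=a^i_{kj}$ (total antisymmetry being forced by (1)--(3)) and view $B$ as the surjectivity statement for the contraction map $\Lambda^2\mathbb{K}^d\to\mathbb{K}^d$, $v_i\wedge v_j\mapsto[v_i,v_j]$. An element $w=\sum_i c_iv_i$ is central iff $\sum_i c_iS_{ijk}=0$ for all $j,k$; using the total antisymmetry of $S$ this contraction in the first slot coincides, after relabeling, with the kernel of the adjoint of the contraction map, so it is trivial exactly when that map is surjective, which is condition (4). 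Thus maximal rank is equivalent to triviality of the center modulo $\mathfrak{n}^2$, which closes the reduced claim and completes the equivalence.
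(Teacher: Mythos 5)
Your proof is correct, and its skeleton matches the paper's: the same metabolic basis $\{v_1,\dots,v_d,z_1,\dots,z_d\}$ with $\varphi(v_i,z_j)=\delta_{ij}$ giving the matrix $B_0$, the observation that the columns of $B(A_1,\dots,A_d)$ are the coordinate vectors of the products $[v_i,v_j]$ (so condition (4) is exactly $\dim\mathfrak{n}^2=d$), and the same closing argument that maximal rank kills any central element of $\mathfrak{v}$ --- the paper writes this as $[A_1\cdots A_d]^t\mathbf{u}=0\Rightarrow\mathbf{u}=0$ using $A_i^t=-A_i$, while you phrase it as triviality of the kernel of the adjoint of the contraction map; these are the same computation. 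Where you genuinely diverge is the organizing device for conditions (1)--(3) and for invariance. The paper works inside the orthogonal algebra: it invokes the criterion $\inner\mathfrak{n}\subseteq\mathfrak{so}(\mathfrak{n},\varphi)$, decomposes $\mathfrak{so}(\mathfrak{n},\varphi)$ into the graded pieces $\sigma_{\mathfrak{p},\mathfrak{q}}$, represents each $\ad v_i$ by the block matrix \eqref{matrixder}, and derives skewsymmetry of $A_i$ from the matrix identity $\mathbf{A}(h)^tB_0+B_0\mathbf{A}(h)=0$, with (2) and (3) obtained separately from anticommutativity of the bracket. You instead encode everything in the alternating trilinear form $T(x,y,z)=\varphi([x,y],z)$ via the identification $a^i_{lj}=T(v_i,v_j,v_l)$, so that (1), (2), (3) and the invariance identity $a^i_{lj}=a^j_{il}$ are all read off as instances of the total antisymmetry of a single tensor. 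Your packaging is more economical and makes explicit the correspondence with alternating trilinear forms alluded to in the introduction (after Noui--Revoy); the paper's route has the advantage of producing the explicit block representation of the inner derivations, which it reuses in the subsequent isomorphism theorem. Both arguments are complete.
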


\begin{proof}  Assume firstly that $(\mathfrak{n},\,\varphi)$ is quadratic and reduced. From (\ref{condicionortogonal}) we get $Z(\mathfrak{n})= \mathfrak{n}^2=(\mathfrak{n}^2)^\perp$. Hence we can take
a minimal set of generators $\{v_1,\dots, v_d\}$ of $\mathfrak{n}$ and a basis $\{z_1,\dots, z_d\}$ of  $\mathfrak{n}^2$ such that $\varphi(v_i,z_j)=\delta_{ij}$. Therefore, $\mathfrak{n}$ decomposes as the direct sum,
$$\mathfrak{n}=\mathfrak{v}\oplus \mathfrak{n}^2,$$where $\mathfrak{v}$ denotes the linear $\linearspan$ of $\{v_1,\dots, v_d\}$
and the matrix of $\varphi$ attached to the basis $\mathcal{B}=\{v_1,\dots, v_d,z_1,\dots, z_d\}$ is as in (\ref{hyperbolicmatrix}). Now, the inner derivation algebra of $\mathfrak{n}$, $\inner\,  \mathfrak{n}$, is generated by the set $\{\textrm{ad}\, v_i:1\leq i\leq n\}$. This algebra is abelian and $\dim\inner\,  \mathfrak{n}=d$ because of $\mathfrak{v}\cap Z(\mathfrak{n})=0$ (the map $\mathfrak{v}\to \inner\, \mathfrak{n}$, $v\mapsto \textrm{ad}\, v$ is one-one).\medskip

Following (\ref{so}), the form $\varphi$ is invariant if and only if $\inner\,\mathfrak{n}$ is a subalgebra of the orthogonal Lie algebra $\mathfrak{so}(\mathfrak{n},\varphi)$. We can decompose this algebra as the direct sum of subspaces:
$$
	\mathfrak{so}(\mathfrak{n},\varphi)=\mathfrak{so}(\mathfrak{v}\oplus \mathfrak{n}^2,\varphi)=\sigma_{\mathfrak{v}, \mathfrak{v}}\oplus\sigma_{\mathfrak{v}, \mathfrak{n}^2}\oplus\sigma_{\mathfrak{n}^2, \mathfrak{n}^2},
$$
where $\sigma_{\mathfrak{p},\mathfrak{q}}=\linearspan\,\langle\varphi(x,\,\cdot\,)y-\varphi(y,\,\cdot\,)x: x\in \mathfrak{p}, y\in \mathfrak{q}\rangle$. The algebra $\mathfrak{so}(\mathfrak{n},\varphi)$ is $\mathbb{Z}_2$-graduated by declaring  $\mathfrak{so}_0=\sigma_{\mathfrak{v}, \mathfrak{n}^2}$ as the even part and $\mathfrak{so}_1=\sigma_{\mathfrak{v}, \mathfrak{v}}\oplus \sigma_{\mathfrak{n}^2, \mathfrak{n}^2}$ as the odd part. Therefore, any $h\in \inner\,  \mathfrak{n}$ decomposes as $h=h_0+h_1$ where $h_i\in \mathfrak{so}_i$. Moreover, we have that
$$
h(\mathfrak{v})\subseteq \mathfrak{n}^2, \text{ and } \ h(\mathfrak{n}^2)=0.
$$Hence $d_0=0$ and the matrix representation of $d$ with respect to the basis $\mathcal{B}$ is
\begin{equation}\label{matrixder}
h=h_1=\left(
\begin{array}{cc}
\textbf{0}_{d\times d} & \mathbf{0}_{d\times d} \\
A& \mathbf{0}_{d\times d} \\
\end{array}
\right)=\mathbf{A}(h).
\end{equation}
Now,
\begin{equation}\label{equivalenciaortogonal}
h\in \mathfrak{so}(\mathfrak{n},\varphi)\Leftrightarrow \varphi(h(x),y)+\varphi(x,h(y))=0 \Leftrightarrow \mathbf{A}(h)^tB+B\mathbf{A}(h)=0.
    \end{equation}
Note that (\ref{equivalenciaortogonal}) is equivalent to $A^t=-A$. Let $A_i$ denote the skewsymmetric matrice of order $d\times d$ attached to $\mathbf{A}(\textrm{ad}\, v_i)$ for $i=1,\dots, d$. We claim that $\{A_1,\dots, A_{d}\}$ is a family of $d$-quadratic matrices. Conditions (1), (2) and (3) in Definition \ref{dcuadratica} follow from (\ref{matrixder}), (\ref{equivalenciaortogonal}) and the anticommutativity of the Lie bracket of $\mathfrak{n}$. Condition (4) follows from the basic fact that $\linearspan\,\langle h(v_i):i=1,\dots ,d, h\in \inner\, \mathfrak{n} \rangle=\mathfrak{n}^2$. This implies that the rank of $B(A_1,\dots,A_d)$ is exactly $d$.\medskip

For the converse, we note that the Lie bracket given in (ii) is anticommutative by using properties (2) and (3) of $d$-quadratic matrices. The triple product $[[x,y],z]$ is null in $\mathfrak{n}$, so, the Jacobi identity is trivial. Therefore, $(\mathfrak{n},[\cdot,\cdot])$ is a Lie algebra. From condition (4) of a $d$-quadratic family, we have that $0\neq\mathfrak{n}^2=\linearspan\,\langle z_1,\dots,z_d\rangle$.  Hence, $\mathfrak{n}$ is a $2$-step nilpotent Lie algebra of type $d$. The structure constants $a^i_{kj}$ of $\mathfrak{n}$ show us that the matrix representation of $\mathbf{A}(\ad\,v_i)$ is determine by the matrix $A_i$. According to condition (1) in Definition \ref{dcuadratica}, every matrix $A_i $ is skewsymmetric. Hence, the nondegenerate bilinear form $\varphi(v_i,z_j)=\delta_{ij}$ and $\varphi(v_i,v_j)=\varphi(z_i,z_j)=0$ is invariant because of (\ref{equivalenciaortogonal}). In order to prove that $\mathfrak{n}$ is reduced, we will test that $Z(\mathfrak{n})\cap\mathfrak{v}=0$, where $\mathfrak{v}=\linearspan\langle v_1,\dots,v_d \rangle $. Let $u=x_1v_1+\dots+x_dv_d$ be ($x_i$ scalars) and assume $u\in Z(\mathfrak{n})$. Denote $\mathbf{u}=(x_1,\dots,x_n)$ and note that $A_i\mathbf{u}=0$ for $i=1,\dots, d$. Since $A_i^t=-A_i$, we have that $[A_1\,\dots,A_d]^t\mathbf{u}=0$. From condition (4) of a $d$-quadratic family, the rank of $[A_1\,\dots,A_d]$ is $d$. Therefore $u=0$ which proves the equality $Z(\mathfrak{n})\cap\mathfrak{v}=0$.
\end{proof}

In the sequel, we denote as $(\mathfrak{n}(A_1,\dots,A_d), \varphi_{0})$ the quadratic Lie algebra attached to the family of $d$-quadratic matrices $\{A_1,\dots,A_d\}$ as it is described in item (ii) of Theorem \ref{clasifcuadraticas}. For any $n\geq 1$, we also denote $(\mathbb{K}^n,\Phi_n)$ the quadratic abelian Lie algebra of type $n$ where $\Phi_n$ is the nondegenerate bilinear form given by the inner product $\Phi_n(u,v)=\sum_{i=1}^n u_iv_i$.

\begin{example} The $2\times 2$ skewsymmetric matrices are of the form
$$
\left(
\begin{array}{cc}
0&a\\
-a&0\\
\end{array}
\right),\quad a\in \mathbb{K}.
$$So, there are no $2$-quadratic families of matrices. Hence, in characteristic zero, there are no $2$-step quadratic Lie algebras of type $2$.
\end{example}

\begin{example} Let $d=3$ be. The $3$-quadratic families of matrices are of the form $\{\mathbf{A}_1(a),$ $\mathbf{A}_2(a),$ $\mathbf{A}_3(a)\}$ where
$$
\mathbf{A}_1(a)=
\begin{pmatrix}
0&0&0\\
0&0&a\\
0&-a&0\\
\end{pmatrix},
\enspace\mathbf{A}_2(a)=
\begin{pmatrix}
0&0&-a\\
0&0&0\\
a&0&0\\
\end{pmatrix},
\enspace\mathbf{A}_3(a)=
\begin{pmatrix}
0&a&0\\
-a&0&0\\
0&0&0\\
\end{pmatrix}.
$$and  $0\neq a\in \mathbb{K}$. It is easily checked that any $2$-step nilpotent quadratic Lie algebra of type $3$ is reduced. Therefore the class of quadratic $2$-step Lie algebras of type $3$ are of the form $(n(\mathbf{A}_1(a),$ $\mathbf{A}_2(a),$ $\mathbf{A}_3(a)), \varphi_0)$. Following \cite{BeCoLa17}, any of them is isometrically isomorphic to the quadratic Lie algebra $(\mathfrak{n}_{3,2}(\mathbb{K}^3), \Psi_0)$, where $\mathfrak{n}_{3,2}(\mathbb{K}^3)=\mathbb{K}^3\oplus \Lambda^2\mathbb{K}^3$ and the matrix of $\Psi_0$ with respect to the basis $\{e_i, e_i\wedge e_j\}$ is

\noindent\emph{(\emph{here} $e_1=(1,0,\dots,0), e_2=(0,1,\dots,0), \dots, e_d=(0,0,\dots,1)$)}
$$
\left(
\begin{array}{cccccc}
0&0&0&0&0&-1\\
0&0&0&0&1&0\\
0&0&0&-1&0&0\\
0&0&-1&0&0&0\\
0&1&0&0&0&0\\
-1&0&0&0&0&0\\
\end{array}
\right).
$$
\end{example}

\begin{example}
For $d=4$, the existence of quadratic and reduced $4$-step Lie algebras is equivalent to the existence of a $4\times 6$ matrix of maximal rank $4$ with the following shape:
$$
B(A_1,A_2,A_3,A_4)=\left(
\begin{array}{cccccc}
0&0&0&-a&-b&-c\\
0&a&b&0&0&-d\\
-a&0&c&0&d&0\\
-b&-c&0&-d&0&0\\
\end{array}
\right).
$$
Any minor of order $4$ of this type of matrices is null. So, the rank of the matrix $B(A_1,A_2,A_3,A_4)$ is less or equal than $3$ and therefore in characteristic zero there are no $2$-step reduced quadratic Lie algebras of type $4$. Up to isometric isomorphisms the non reduced are of the form $$(\mathfrak{n}_{3,2}(\mathbb{K}^3)\oplus \mathbb{K}, \Psi_0 \perp \Phi_1).$$
\end{example}

%En el caso $d=5$, las cuadr\'aticas vienen determinadas por matrices de rango $5$ de la forma
%$$
%C=\left(
%\begin{array}{cccccccccc}
%0&0&0&0&-a&-b&-c&-d&-e&-f\\
%0&a&b&c&0&0&0&g&h&-m\\
%-a&0&d&e&0&-g&-h&0&0&-n\\
%-b&-d&0&f&g&0&m&0&n&-n\\
%-c&-e&-f&0&h&-m&0&-n&0&0\\
%\end{array}
%\right)
%$$si tomamos escalares $ad\neq 0$ y $b=c=d=e=g=h=m=n=0$ aparece una familia biparam\'etrica de %matrices de rango 5. Falta escribir base desde el cociente de la corresondiente %$\mathfrak{n}_{5,2}$.....
%\end{examp}

From now on, our goal is to investigate the existence problem of quadratic Lie algebras $(\mathfrak{n}(A_1,\dots,A_d), \varphi_{0})$ for any arbitrary $d\geq 5$. We will also study the problem of isometric isomorphisms. The first problem is a simple exercise of linear algebra. For the second one, we will use the functorial relation introduced in Section \ref{section2}.\medskip

The next result ensures the existence of quadratic and reduced $2$-step Lie algebras of arbitrary type $d$ different from $1,2$ and~$4$ and and over any field $\mathbb{K}$ of characteristic zero. This result has also been established in \cite[]{O07}, Proposition 3.3.  We provide here an alternative proof.

\begin{proposition}\label{existence}
For any $d\neq 1,2,4$, there exist $d$-quadratics families of matrices.
\end{proposition}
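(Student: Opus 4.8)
The plan is to use the observation, implicit in Definition \ref{dcuadratica}, that a $d$-quadratic family is merely a coordinate description of a non-degenerate alternating trilinear form. Writing $c_{ijk}=a^i_{kj}$ for the entries of the family (so that $c_{ijk}$ is the coefficient of $z_k$ in $[v_i,v_j]$), conditions (1)--(3) amount exactly to the total antisymmetry of $c$: antisymmetry in the last two indices comes from the skewsymmetry of each $A_i$, antisymmetry in the first two from the column relation (3), and the vanishing on repeated indices from the null-column condition (2). Condition (4), meanwhile, says that the columns of $B(A_1,\dots,A_d)$, namely the bracket vectors $[v_i,v_j]=\sum_k c_{ijk}z_k$ with $i<j$, span $\mathbb{K}^d$; equivalently, the only $v=\sum_i x_i v_i$ with $\sum_i x_i c_{ijk}=0$ for all $j,k$ is $v=0$, i.e.\ $c$ has trivial radical. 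Thus the whole problem reduces to producing, for each $d\neq1,2,4$, an alternating trilinear form on $\mathbb{K}^d$ with zero radical, after which the matrices are recovered by $a^i_{kj}=c_{ijk}$.

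I would dispatch the odd case $d=2m+1\geq3$ by one uniform construction. Let $\sigma=e_2\wedge e_3+e_4\wedge e_5+\dots+e_{2m}\wedge e_{2m+1}$ be a symplectic $2$-form on $\linearspan\langle e_2,\dots,e_{2m+1}\rangle$ and set $\omega=e_1\wedge\sigma$. Contracting with an arbitrary $v=x_1e_1+v'$, where $v'\in\linearspan\langle e_2,\dots,e_{2m+1}\rangle$, gives $\iota_v\omega=x_1\sigma-e_1\wedge\iota_{v'}\sigma$. The two summands lie in the complementary subspaces $\Lambda^2\linearspan\langle e_2,\dots\rangle$ and $e_1\wedge\linearspan\langle e_2,\dots\rangle$ of $\Lambda^2\mathbb{K}^d$, so $\iota_v\omega=0$ forces $x_1\sigma=0$ and $\iota_{v'}\sigma=0$; since $\sigma\neq0$ is non-degenerate, this yields $x_1=0$ and $v'=0$. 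Hence $\omega$ has trivial radical, settling $d=3,5,7,9,\dots$ in one stroke.

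For the even case $d\geq6$ I would use a direct-sum (block-diagonal) construction. Writing $d=3+(d-3)$ with $d-3$ odd and $\geq3$, I combine $\omega_3=e_1\wedge e_2\wedge e_3$ with the odd form $\omega_{d-3}$ from the previous step on complementary coordinate blocks $\mathbb{K}^3\oplus\mathbb{K}^{d-3}=\mathbb{K}^d$, all mixed components being set to zero; at the matrix level this is the placement $C_i=\operatorname{diag}(A_i,0)$ for $i\leq3$ and $C_i=\operatorname{diag}(0,A'_{i-3})$ for $i>3$. Conditions (1)--(3) for $\{C_i\}$ follow blockwise from those for the two given families, and for condition (4) one notes that the within-block brackets already span each block while the cross-block brackets vanish, so the rank is $d$; equivalently, a radical vector of a direct sum projects into the radical of each summand, and both are zero. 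This reaches every even $d\geq6$, and with the odd family every $d\neq1,2,4$.

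Conditions (1)--(3) hold automatically in both constructions, since they only record that $c$ is alternating, so the single point needing genuine verification---and the main obstacle---is condition (4), the maximality of the rank, i.e.\ the vanishing of the radical. The contraction-splitting argument in the odd case and the ``radical of a direct sum is the sum of the radicals'' observation in the even case are precisely what make (4) transparent. The only bookkeeping to be careful about is the coverage arithmetic: that $3+\textrm{odd}$ produces every even $d\geq6$, so that $d=4$---the genuine exception, where every alternating $3$-form on $\mathbb{K}^4$ is degenerate---never occurs as a building block, and that the odd template is available in each odd dimension actually used.
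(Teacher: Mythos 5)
Your proof is correct, but it takes a genuinely different route from the paper's. The paper works directly with the skewsymmetric matrices: for odd $d$ it picks $A_1$ of rank $d-1$ with null first row and column and then an $A_2$ whose first column is forced to be nonzero by condition (3), so that $[A_1\,A_2]$ already has rank $d$; for even $d>4$ it builds $A_1$ of rank $d-2$ and an explicitly tailored $A_2$ (rows $p_3,p_4$ with the constraint $a\neq s_4/s_3$) and kills the two-dimensional common kernel by a short contradiction argument, finally extending $\{A_1,A_2\}$ to a full family, which can only preserve or increase the rank. You instead make explicit the dictionary (only alluded to in the introduction via Noui--Revoy) between $d$-quadratic families and alternating trilinear tensors $c_{ijk}=a^i_{kj}$ with trivial radical --- your translation of conditions (1)--(3) into total antisymmetry and of condition (4) into vanishing radical is accurate, the latter using that total antisymmetry identifies the radical in the first slot with the span condition on the $k$-index columns of $B(A_1,\dots,A_d)$ in \eqref{Qq}. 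Your odd-dimensional form $e_1\wedge\sigma$ with $\sigma$ symplectic and your $3+(d-3)$ block sum for even $d\geq 6$ then settle condition (4) with essentially no computation, and they explain conceptually why $d=4$ is excluded (every $3$-form on $\mathbb{K}^4$ is degenerate), which the paper only observes by checking that all order-$4$ minors vanish. The trade-off is that your even-dimensional examples are decomposable as quadratic Lie algebras (orthogonal sums of a type-$3$ and a type-$(d-3)$ piece), while the paper's two-matrix rank argument is closer in spirit to the algorithms of Section 4; but since Proposition \ref{existence} and Definition \ref{dcuadratica} ask only for existence, not indecomposability, your argument fully proves the statement.
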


\begin{proof}
From previous examples it is clear that $d\geq 3$ and $d\neq 4$.
Assume firstly $d>1$ is odd, and choose a $d\times d$ skewsymmetric matrix  $A_1$ of rank $d-1$, with its first row and its first column being null. Then, $N_1=\{u\in \mathbb{K}^d: uA_1=A_1u^t=0\}=\mathbb{K}\cdot e_1$ where $e_1=(1,0,\cdots,0)$. As $d>1$, we can take a second skewsymmetric matrix $A_2$ such that its first row is the additive inverse of the second row of $A_1$. Clearly, $e_1\notin \{u\in \mathbb{K}^d: uA_2=0\}$. Therefore, $\{u\in \mathbb{K}^d: u[A_1\,A_2]=0\}=0$. This implies that $[A_1\,A_2]$ is a matrix of maximal rank $d$. We can now extend the set $\{A_1, A_2\}$ to a $d$-quadratic family $\{A_1, A_2, \dots, A_d\}$ in an easy (and not unique) way.\medskip

In case $d>4$ be even, take a $d\times d$ skewsymmetric matrix $A_1$ with rows $r_1=r_d=(0,\dots,0)$ and rows $\,r_2,\dots,\,r_{d-1}$ being a set of linearly independent row vectors of $\mathbb{K}^d$. Therefore, the rank of $A_1$ is $d-2$. It is clear that $N_1=\{u\in \mathbb{K}^d: uA_1=0\}=\mathbb{K}\cdot e_1\oplus \mathbb{K}\cdot e_d$. Now, choose a second matrix $A_2$ with first row $-r_2=-(0,0,-s_3,\dots,-s_d)\neq 0$ and its second row being null. Since $d>4$, we can take the $3$-th and $4$-th rows of $A_2$ as $p_3=(s_3,0,0,0,\dots,1)$ and $p_4=(s_4,0,0,0,0,\dots,a)$ where $a\neq s_4/s_3$ if $s_3\neq 0$ or $a=0$ otherwise. Finally we add $j$-th rows for $j=5,\dots,d$ just to get $A_2$ as a skewsymmetric matrix. For the pair $A_1,A_2$, we claim that $N_2=\{u\in \mathbb{K}^d: u[A_1\,A_2]=0\}=0$. \medskip

Suppose there is $0\neq u\in N_2$. Since $u\in N_1$ we can write $u=t\cdot e_1+s\cdot e_d$ for some $(0,0)\neq(t,s) \in \mathbb{K}\times \mathbb{K}$. We note that $u\neq e_1$ because of the first row of $A_2$ is not null. Reescaling if necessary, we can assume $u=t_0\cdot e_1+e_d$. In particular, we have that
$$
u\cdot p_3^t=t_0s_3+1=0=u\cdot p_4^t=t_0s_4+a.
$$
Hence, $s_3\neq 0$ and $t_0=-\dfrac{1}{s_3}$ which implies $a=\dfrac{s_4}{s_3}$, a contradiction. This proves our claim. Now, $N_2=0$ implies that $[A_1\,A_2]$ is a matrix of maximal rank $d$ and, reasoning as in the odd case, we get our result.
\end{proof}

An element $\psi\in {\bf Sym_0(d,2)}$ is called reduced if and only if ${\bf Q_{d,2}}(\psi)=\dfrac{\mathfrak{n}_{d,2}}{\ker \psi}$ is a reduced Lie algebra. This assertion is equivalent to
\begin{equation}\label{conditon-dt-reduced}
\mathfrak{n}_{d,2}^2=\{x\in \mathfrak{n}_{d,2}:[x,\mathfrak{n}_{d,2}]\subseteq \ker  \psi\}.
\end{equation}
For any $d$-quadratic family $\{A_1,\dots,A_{d}\}$, we denote
\begin{equation}\label{matriz-dt-formabilineal}
\mathcal{Q}(A_1,\dots,A_{d})=\left(
\begin{array}{cc}
\textbf{0}_{d\times d} &B(A_1,\dots,A_{d})\\
B(A_1,\dots,A_{d})^t& \mathbf{0}_{\frac{d(d-1)}{2}\times \frac{d(d-1)}{2}}\\
\end{array}
\right),
\end{equation}
where $B(A_1,\dots,A_{d})$ is defined as in (\ref{Qq}).\medskip

Following Proposition 3 in \cite{Sa71}, for a given $\{v_1,\dots,v_d\}$ basis of a $d$-dimensional vector space $\mathfrak{v}$, any linear map $f\colon\mathfrak{v}\to \mathfrak{n}_{d,2}(\mathfrak{v})=\mathfrak{v}\oplus\Lambda^2\mathfrak{v}$, for which the vectors $f(v_1),\dots,f(v_d)$ are linearly independent, extends to the automorphism $\tau_f$ by declaring
\begin{equation}\label{ruleauto}
\tau_f([v_i,v_j])=\tau_f(v_i\wedge v_j)=f(v_i)\wedge f(v_j)=[f(v_i),f(v_j)].
\end{equation}
Even more, any automorphism of  $\mathfrak{n}_{d,2}$ is of this form. Hence, in the Hall basis $\mathcal{H}_{\mathfrak{v}}=\{v_i,v_i\wedge v_j:i=1,\dots,d, i<j\}$, the automorphisms of $\mathfrak{n}_{d,2}(\mathfrak{v})$ are represented by matrices of the form
\begin{equation}\label{automorfismos}
\tau_\mathfrak{s}(Q,X)=\left(\begin{array}{cc}
Q &\mathbf{0}_{d\times \frac{d(d-1)}{2}}\\
X& \hat{Q}\\
\end{array}
\right),
\end{equation}
where $X$  is a any matrix of order $\frac{d(d-1)}{2}\times d$, $Q$ is a regular matrix of order $d\times d$, and $\hat{Q}$ is a matrix completely determined from $Q$ by the rule \eqref{ruleauto}. In case $Q=(b_{ij})$, from a straightforward computation we get that
\begin{equation}\label{ruleQgorro}
\tau_f(v_i\wedge v_j)=\sum_{1\leq r<s\leq n}\,\det \left(\begin{array}{cc}
b_{ri} &b_{rj}\\
b_{si} &b_{sj}\\
\end{array}
\right)v_r\wedge v_s.
\end{equation}
The equation (\ref{ruleQgorro}) provides the entries of the matrix $\hat{Q}$ in terms of the entries of the matrix $Q$. %\marginpar{Uno de los problemas en \cite{BeCoLa17} se resuelve en el casos 2-step.}

\begin{theorem}
Let $\{A_1,\dots, A_d\}$ and $\{E_1,\dots, E_d\}$ be two families of $d$-quadratic matrices and let $(\mathfrak{n}(A_1,\dots, A_d),\varphi_0)$ and $(\mathfrak{n}(E_1,\dots, E_d),\psi_0)$ be the quadratic Lie algebras attached to them as it is described in Theorem \ref{clasifcuadraticas}. Then, the Lie algebras $(\mathfrak{n}(A_1,\dots, A_d),\varphi_0)$ and $(\mathfrak{n}(E_1,\dots, E_d),\psi_0)$ are isometrically isomorphic if and only if
there exists a regular $d\times d$ matrix $Q$ such that
$$
B(E_1,\dots, E_d)=Q^tB(A_1,\dots, A_d)\hat{Q},
$$
where $\hat{Q}$ is given in terms of $Q=(b_{ij})$ through the formula \eqref{ruleQgorro} and $B(E_1,\dots, E_d)$, $B(A_1,\dots, A_d)$ are as described in (\ref{Qq}).
\end{theorem}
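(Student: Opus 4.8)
The plan is to transport the statement into the category ${\bf Sym_0(d,2)}$ through the functor ${\bf Q_{d,2}}$ and then reduce the isomorphism question to a single block-matrix identity in the Hall basis of $\mathfrak{n}_{d,2}(\mathbb{K}^d)$.

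First I would pin down, for a $d$-quadratic family $\{A_1,\dots,A_d\}$, the object $\psi_A\in{\bf Sym_0(d,2)}$ with ${\bf Q_{d,2}}(\psi_A)\cong(\mathfrak{n}(A_1,\dots,A_d),\varphi_0)$. By Lemma~\ref{reduccion}, $\psi_A$ is the pullback of $\varphi_0$ along the canonical surjection $\mathfrak{n}_{d,2}\to\mathfrak{n}_{d,2}/\ker\psi_A\cong\mathfrak{n}(A_1,\dots,A_d)$ sending $v_i\mapsto v_i$ and $v_i\wedge v_j\mapsto\sum_k a^i_{kj}z_k$. Evaluating in the Hall basis $\{v_i,v_i\wedge v_j\}$ and using the matrix $B_0$ of~\eqref{hyperbolicmatrix} one gets $\psi_A(v_i,v_j)=0$, $\psi_A(v_i\wedge v_j,v_k\wedge v_l)=0$, and $\psi_A(v_i,v_k\wedge v_l)=a^k_{il}$. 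Comparing the last value with the column ordering prescribed in~\eqref{Qq} shows that the Gram matrix of $\psi_A$ in this basis is exactly $\mathcal{Q}(A_1,\dots,A_d)$ from~\eqref{matriz-dt-formabilineal}; likewise $\psi_E$ has Gram matrix $\mathcal{Q}(E_1,\dots,E_d)$.

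Next I would chain Lemma~\ref{equivalentes} with Lemma~\ref{isomorfismo}: the two quadratic algebras are isometrically isomorphic if and only if $\psi_A$ and $\psi_E$ are isomorphic objects of ${\bf Sym_0(d,2)}$, equivalently $\psi_E$ lies in the orbit $Orb_{\aut\,\mathfrak{n}_{d,2}}(\psi_A)$, that is, there is $\theta\in\aut\,\mathfrak{n}_{d,2}$ with $\psi_E=(\psi_A)_\theta$. Writing $\theta$ in the Hall basis as $M=\tau_\mathfrak{s}(Q,X)$ from~\eqref{automorfismos}, the relation $\psi_E=(\psi_A)_\theta$ is precisely the matrix identity $M^t\,\mathcal{Q}(A_1,\dots,A_d)\,M=\mathcal{Q}(E_1,\dots,E_d)$.

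The final step is to expand this identity. Writing $B_A=B(A_1,\dots,A_d)$ and $B_E=B(E_1,\dots,E_d)$ and multiplying the block-lower-triangular $M$ against the block-antidiagonal $\mathcal{Q}(A_1,\dots,A_d)$ produces the off-diagonal block equation $Q^tB_A\hat{Q}=B_E$ (its transpose $\hat{Q}^tB_A^tQ=B_E^t$ being automatically consistent) together with the top-left block equation $Q^tB_AX+X^tB_A^tQ=0$. The crucial observation, and the one point needing care, is that the lower block $X$ of an automorphism is completely free, since any $X$ of order $\frac{d(d-1)}{2}\times d$ is admissible in~\eqref{automorfismos}; hence the second equation is never an obstruction, as $X=0$ solves it. Thus the existence of a suitable $\theta$ is equivalent to the existence of a regular $Q$ with $Q^tB_A\hat{Q}=B_E$: for the forward implication I would simply read this off the off-diagonal block of $\psi_E=(\psi_A)_\theta$, and for the converse I would exhibit the automorphism $\theta=\tau_\mathfrak{s}(Q,0)$, which realizes $\psi_E=(\psi_A)_\theta$ and therefore the desired isometric isomorphism. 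I expect the genuine obstacle to be not this closing linear algebra but the bookkeeping in the first step: matching the structure constants $a^k_{il}$ to the exact column ordering of $B(A_1,\dots,A_d)$ so that the Gram matrix of $\psi_A$ is literally $\mathcal{Q}(A_1,\dots,A_d)$, since an inconsistent convention there would inject spurious transposes into the final relation.
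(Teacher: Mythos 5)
Your proposal is correct and follows essentially the same route as the paper: realize the pulled-back form on $\mathfrak{n}_{d,2}$ with Gram matrix $\mathcal{Q}(A_1,\dots,A_d)$ in the Hall basis, reduce via Lemmas \ref{equivalentes} and \ref{isomorfismo} to the automorphism action, and read the relation $Q^tB(A_1,\dots,A_d)\hat{Q}=B(E_1,\dots,E_d)$ off the off-diagonal block of $\tau(Q,X)^t\,\mathcal{Q}(A_1,\dots,A_d)\,\tau(Q,X)=\mathcal{Q}(E_1,\dots,E_d)$, with $X=0$ handling the converse. The only cosmetic difference is that you compute the Gram matrix directly and use a single automorphism, whereas the paper passes through an adapted basis $\mathcal{B}'$ and a composition $\tau(Q,X)\tau(S,Y)$; the block computation and conclusion are identical.
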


\begin{proof}
Along the proof we will denote $\mathfrak{n}(A_1,\dots, A_d)$ and $\mathfrak{n}(E_1,\dots, E_d)$ as $\mathfrak{n}_A$ and $\mathfrak{n}_B$ respectively. According to Lemma \ref{reduccion}, $(\mathfrak{n}_A,\varphi_0)$ is isometrically isomorphic to $(\mathfrak{n}_{d,2}/\ker \varphi, \varphi)$ for some reduced invariant bilinear form $\varphi\in {\bf Sym _0(d,2)}$. Then, from Theorem \ref{clasifcuadraticas}, there exists a minimal generator set $\mathfrak{u}=\{u_1,\dots,u_d\}$ of $\mathfrak{n}_{d,2}$ of isotropic vectors and a totally isotropic central ideal $\mathfrak{c}=\linearspan\,\{z_1,\dots, z_d\}$ such that
$$
\mathfrak{n}_{d,2}=\mathfrak{v}\oplus \wedge^2\mathfrak{v}=\linearspan\,\langle u_1,\dots,u_d\rangle\oplus (\mathfrak{c}\oplus \ker  \varphi),
$$
$$
[u_i,u_j]=u_i\wedge u_j\equiv \sum_{k=1}^da^i_{kj}z_k\quad (\textrm{mod}\ \ker  \varphi),
$$
$$
\varphi(u_i,z_j)=\delta_{ij},
$$
and the set $\{a_{kj}^i\}$ of structure constants is determined by the $d$-quadratic family $\{A_1,\dots,A_d\}$. So, if we fixed a basis $\mathcal{B}$ of $\ker  \varphi=\mathfrak{n}_{d,2}^\perp$, the matrix of $\varphi$ attached to $\mathcal{B}'=\{u_1,\dots, u_d,z_1,\dots, z_d\}\cup \mathcal{B}$ is
$$
\mathcal{M}=\left(
\begin{array}{ccc}
\textbf{0}_{d\times d} & \mathbf{I}_{d\times d} &\mathbf{0}_{d\times \frac{d(d-3)}{2}}\\
\mathbf{I}_{d\times d}& \mathbf{0}_{d\times d} &\mathbf{0}_{d\times \frac{d(d-3)}{2}}\\
 \mathbf{0}_{ \frac{d(d-3)}{2}\times d} & \mathbf{0}_{ \frac{d(d-3)}{2}\times d}& \mathbf{0}_{ \frac{d(d-3)}{2}\times\frac{d(d-3)}{2}}\\
\end{array}
\right).
$$

For every $i=1,\dots, d$, the inner derivation $\textrm{ad}\, u_i$is represented by a matrix (respect to the basis $\mathcal{B}'$) of the form 
%the Hall basis $\mathcal{H}_{\mathfrak{u}}=\{u_i, [u_i,u_j]:1\leq i\leq d,i<j\}$, provides the inner derivations a $\textrm{ad}\, u_i$ in the matrix form
$$\left(
\begin{array}{ccc}
\textbf{0}_{d\times d} & \mathbf{0}_{d\times d} &\mathbf{0}\\
\mathbf{A_i}& \mathbf{0}_{d\times d} &\mathbf{0}\\
 \mathbf{C_i} &\mathbf{0}&\mathbf{0}\\
\end{array}
\right).$$
Let $D_{i<j}$ be the submatrix of $C_i$ given by the set of all $j$-th columms of $C_i$ such that $i<j$. Denote as $D$ the matrix $D=[D_{1<j}D_{2<j}\dots D_{d-1<j}]$ and let $P$ be the matrix:
$$
P=\left(
\begin{array}{cc}
\textbf{I}_{d\times d} & \mathbf{0}_{d\times \frac{d(d-1)}{2}} \\
\mathbf{0}_{d\times d}& B(A_1,\dots,A_{d}) \\
\mathbf{0}_{\frac{d(d-3)}{2}\times d}& D \\
\end{array}
\right)
$$
Clearly $P$ is a regular matrix because of $\mathfrak{n}_{d,2}^2=\linearspan\,\langle h(u_i):h\in \inder\, \mathfrak{n}_{d,2} \rangle$. In fact, $P$ provides the change of basis from the Hall basis $\mathcal{H}_{\mathfrak{u}}=\{u_i,u_i\wedge u_j\}$ to $\mathcal{B}'$. Therefore, the matrix of $\varphi$ attached to $\mathcal{H}_{\mathfrak{u}}$ is
$P^t\mathcal{M}P=\mathcal{Q}(A_1,\dots,A_d)
$ just as defined in (\ref{matriz-dt-formabilineal}). In an analogous way, the Lie algebra $(\mathfrak{n}_E,\psi_0)$ is isometrically isomorphic to $(\mathfrak{n}_{d,2}/\ker \psi, \psi)$ for some reduced bilinear form $\psi\in {\bf Sym _0(d,2)}$. From this isomorphism we get a Hall basis $\mathcal{H}_{\mathfrak{v}}=\{v_i,v_i\wedge v_j\}$ and a regular matrix $R$ such that $R^t\mathcal{M}R=\mathcal{Q}(E_1,\dots,E_d)$ is the matrix of $\psi$ attached to $\mathcal{H}_{\mathfrak{v}}$. The change of basis from $\mathcal{H}_\mathfrak{u}$ to $\mathcal{H}_{\mathfrak{v}}$ provides the authomorphism
$\tau(Q,X)$ of $\mathfrak{n}_{d,2}$ and the matrix that represents $\varphi$ in the basis $\mathcal{H}_{\mathfrak{v}}$ is
$$
\tau(Q,X)^t\mathcal{Q}(A_1,\dots,A_d)\tau(Q,X)=\left(\begin{array}{cc}
X^tB_A^tQ+Q^tB_AX &Q^tB_A\hat{Q}\\
\hat{Q}^tB_AQ& \mathbf{0}_{\frac{d(d-1)}{2}\times \frac{d(d-1)}{2}}\\
\end{array}
\right),
$$
where $B_A=B(A_1,\dots, A_d)$. Now, $(\mathfrak{n}_E, \psi_0)$ and $(\mathfrak{n}_A, \varphi_0)$ are isometrically isomorphic if and only if $\varphi$ and $\psi$ are isomorphic in the category ${\bf Sym _0(d,2)}$. From Lemma~\ref{isomorfismo}, the latter assertion is equivalent to the existence of an isometric automorphism $\theta\colon(\mathfrak{n}_{d,2},\psi)\to (\mathfrak{n}_{d,2},\varphi)$. Hence, $\psi=\varphi_\theta$. In the Hall basis $\mathcal{H}_{\mathfrak{v}}$, the automorphism $\theta$ is of the form $\tau(S,Y)$. Then,
$$
\tau(S,Y)^t\tau(Q,X)^t\mathcal{Q}(A_1,\dots,A_d)\tau(Q,X)\tau(S,Y)=\mathcal{Q}(E_1,\dots,E_d).
$$and
$$
\tau(QS,XS+\hat{Q}Y)^t\mathcal{Q}(A_1,\dots,A_d)\tau(QS,XS+\hat{Q}Y)=\mathcal{Q}(E_1,\dots,E_d).
$$
This implies $(QS)^tB_A\widehat{QS}=B_E$ which proves the result.
\end{proof}

Finally, the class of quadratic $2$-step Lie algebras are described in the following theorem.

\begin{theorem}\label{final} For every $d\geq 3$, there exist quadratic $2$-step Lie algebras of type $d$. Up to isometric isomorphisms the algebras in this class are of the form
$(\mathfrak{n}(A_1,\dots, A_{d_1})\oplus \mathfrak{a},\varphi\perp\phi)$ where $d=d_1+d_2$, $4\neq d_1\geq 3$, $\{A_1,\dots, A_{d_1}\}$ is a $d_1$-quadratic family of matrices and $(\mathfrak{a}, \phi)$, is a quadratic and abelian Lie algebra of dimension $d_2\geq 0$.
\end{theorem}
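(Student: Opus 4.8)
The plan is to prove the two halves of the statement separately, using Lemma~\ref{TsouWal} to peel off an abelian summand and Theorem~\ref{clasifcuadraticas} to recognize the reduced core. Throughout I read ``$2$-step nilpotent'' in the strict sense $\mathfrak{n}^3=0$, $\mathfrak{n}^2\neq 0$, so that such an algebra is automatically non-abelian.

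For the existence half I would simply exhibit, for every $d\geq 3$, an admissible choice of data: take $d_1=3$ and $d_2=d-3\geq 0$. A $3$-quadratic family $\{A_1,A_2,A_3\}$ exists by Proposition~\ref{existence} (concretely, the explicit family displayed in the example for $d=3$), and $(\mathbb{K}^{d-3},\Phi_{d-3})$ is a quadratic abelian Lie algebra. The orthogonal direct sum $(\mathfrak{n}(A_1,A_2,A_3)\oplus\mathbb{K}^{d-3},\varphi_0\perp\Phi_{d-3})$ is then a quadratic $2$-step nilpotent Lie algebra, and since the abelian summand contributes nothing to the derived algebra its type is $3+(d-3)=d$.

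For the classification half, let $(\mathfrak{n},\varphi)$ be an arbitrary quadratic $2$-step nilpotent Lie algebra of type $d\geq 3$. If $\mathfrak{n}$ is reduced, Theorem~\ref{clasifcuadraticas} immediately gives $(\mathfrak{n},\varphi)\cong(\mathfrak{n}(A_1,\dots,A_d),\varphi_0)$ for a $d$-quadratic family, which is the asserted form with $d_1=d\geq 3$ and $d_2=0$; here $d\neq 4$ is automatic, since the example for $d=4$ shows no $4$-quadratic family exists. If $\mathfrak{n}$ is not reduced then it is non-reduced and non-abelian, so Lemma~\ref{TsouWal} applies and yields an orthogonal decomposition $\mathfrak{n}=\mathfrak{g}_1\oplus\mathfrak{a}$ with $\varphi=\varphi_1\perp\varphi_2$, where $(\mathfrak{g}_1,\varphi_1)$ is quadratic reduced and $(\mathfrak{a},\varphi_2)$ is quadratic abelian of dimension $d_2\geq 0$. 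At this point I would verify that $\mathfrak{g}_1$ is again genuinely $2$-step: $\mathfrak{g}_1^3\subseteq\mathfrak{n}^3=0$, and $\mathfrak{g}_1^2\neq 0$ because otherwise $\mathfrak{g}_1$ would be simultaneously reduced and abelian, hence zero, forcing $\mathfrak{n}=\mathfrak{a}$ to be abelian. Applying Theorem~\ref{clasifcuadraticas} to $\mathfrak{g}_1$ then identifies it with $(\mathfrak{n}(A_1,\dots,A_{d_1}),\varphi_0)$ for a $d_1$-quadratic family, with $d_1\geq 3$ and $d_1\neq 4$ by the same non-existence of quadratic reduced $2$-step algebras at types $2$ and $4$.

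To close I would reconcile the types. Since $[\mathfrak{g}_1,\mathfrak{a}]=0$ and $\mathfrak{a}$ is abelian, one has $\mathfrak{n}^2=\mathfrak{g}_1^2$, whence the type of $\mathfrak{n}$ equals $\dim\mathfrak{n}-\dim\mathfrak{n}^2=(\dim\mathfrak{g}_1-\dim\mathfrak{g}_1^2)+\dim\mathfrak{a}=d_1+d_2=d$, exactly as claimed. The step that needs the most care is not any hard computation but the structural bookkeeping: confirming that the reduced summand produced by Lemma~\ref{TsouWal} is honestly $2$-step (so that Theorem~\ref{clasifcuadraticas} is applicable to it) and that the constraints $d_1\geq 3$, $d_1\neq 4$ are forced, precisely because quadratic reduced $2$-step Lie algebras of types $2$ and $4$ do not exist.
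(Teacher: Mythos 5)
Your proof is correct and follows exactly the route the paper takes: its own proof is the single line ``follows from Lemma~\ref{TsouWal}, Theorem~\ref{clasifcuadraticas} and Proposition~\ref{existence}'', and you have simply supplied the details (peeling off the abelian summand, checking the reduced part is still $2$-step, excluding $d_1=2,4$, and reconciling the types). No gaps; your version is just a fuller write-up of the intended argument.
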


\begin{proof}
The result follows from Lemma \ref{TsouWal},  Theorem \ref{clasifcuadraticas} and Proposition \ref{existence}. 
\end{proof}

%%%%%%%%%%%%%%%%%%%%%%%%
%%%%%%% SECCION 4: programas computacionales que incluyan el coste de ejecuci?n y la llamada dimensi?n cuadr?tica.
%%%%%%%%%%%%%%%%%%%%%%%%%

\section {Computational algorithms}

In this section we will introduce a computational method for constructing arbitrary (reduced) $2$-step quadratic nilpotent Lie algebras of type $d\geq 3$. The method relies in the theoretical results developed in previous Section 3, Theorems \ref{clasifcuadraticas} and \ref{final}. According to Definition \ref{dcuadratica}, we will designed several algorithms in order to build $d$-quadratric families of matrices ({\bf Algorithms 1 and 2}). Along the section we will use the following notation:%The final {\bf Algorithm 3} generates the quadratic algebras by means of basis, products and bilinear form attached. 
\begin{itemize}
\item $A[a,b]$ denotes the entry in row $a$ and column $b$ for a given matrix $A$;
\item $A[a::b,c::d]$ denotes the submatrix obtained from $A$ by using the whole set of rows from $a$ up to $b$ and the set of columns from $c$ up to $d$, both included.
\end{itemize}
Moreover we consider the first row/column is the one numbered as 1.\medskip

%%%%%%%%%%%LLevar a introducci—n?, explicaci—n de la parte computacional
%por un lado construir las familias de matrices antisim\'etricas $A_1,\dots, A_n$ y la matriz $B(A_1,\dots, A_n)=[B_{1<j}B_{2<j}\dots B_{n-1<j}]$ asociada y, por otro, verificar que el rango de $B(A_1,\dots, A_n)$ sea m\'aximo. Estos algoritmos aparecen descritos en las subsecciones 4.1 y 4.2. En la subsecci\'on 4.3 inclu'mos el algoritmo de generaci\'on de \'algebras mediante bases, productos y forma bilineal asociada. La secci—n final  4.4 ejemplificamos el uso de los algoritmos en los casos $n=5,6,7$.
%%%%%%%%%%%%%%%%%%%

The matrices of a $d$-quadratic family $\{A_1,\dots, A_d\}$ are skew symmetric. From Theorem \ref{clasifcuadraticas} and its proof, every matrix $A_i$ determines an inner derivation $\ad v_i$ (adjoint derivation) of the quadratic Lie algebra $\mathfrak{n}(A_1,\dots, A_d)$. The set $\{v_1,\dots,v_d\}$ is a minimal generator set of $$\mathfrak{n}(A_1,\dots, A_d)=\linearspan\langle v_1,\dots, v_d, z_1,\dots, z_d\rangle,$$  and the adjoint $\ad v_i$ is represented by the matrix  ${\bf A}(\ad v_i)$ as it is described in (\ref{matrixder}). 
The computational procedure to obtain quadratic reduced Lie algebras splits into two steps:
\begin{itemize}
\item First we need to build the skewsymmetric family of matrices $A_1,\dots, A_d$ satisfying conditions (1), (2) and (3) of Definition \ref{dcuadratica} and the matrix $B(A_1,\dots, A_d)=[A_{1<j}A_{2<j}\dots A_{d-1<j}]$ associated to this family,
\item[] and
\item secondly, we need to verify that the rank of $B(A_1,\dots, A_d)$ is maximal. 
\end{itemize}
%Finally we will show how the computational procedure works step-by-step for $d=5,6$, in order to obtain quadratic Lie algebras.\medskip

%hese algorithms are described in subsections 4.1 and 4.2. In subsection 4.3 we include the algebra generation algorithm which defines bases, products and the bilineal form associated. In the final part, subsection 4.4, we give examples of these algorithms for cases $n=5,6,$ and $7$.

\subsection{Algorithm for generating a $\bm{d}$-quadratric family}% $\{A_1,\dots, A_n\}$}
%Aransay
%\begin{algorithm}
%  \caption{Gauss-Jordan elimination algorithm}
%  \begin{algorithmic}[1]
%    \State {{\bf Input:} $A$, a matrix in $\mathbb{F}^{m \times n}$;}
%    \State {{\bf Output:} The \emph{rref} of the matrix $A$}
%    \State $l \gets 0$; \Comment{$l$ is the index where the pivot is placed}
%    \For {$k \gets 0$ {\bf to} $(\ncols A) - 1$}
%        \If {$\nonzero l (\col k A)$} \Comment{Check that col. $k$ has a pivot over pos. $l$}
%            \State $i \gets \indexnonzero l (\col k A)$ \Comment{Let $i$ be the such entry}
%            \State $A \gets \interchangerows A i l$ \Comment{Rows $i$ and $l$ are interchanged}
%            \State $A\, l \gets \multrow  A  l (1 / A\, l\, k)$ \Comment{Row $l$ is multiplied by $(1 / A\, l\, k)$}
%            \For {$t \gets 0$ {\bf to} $(\nrows A) - 1$}
%                \If {$t \ne l$} \Comment{Row $t$ is added row $l$ times $(- A\, t\, k)$}
%                    \State $A\, t \gets \addrow A t l (- A\, t\, k)$
%                \EndIf
%                %\State $t \gets t + 1$
%            \EndFor
%            \State $l \gets l + 1$
%        \EndIf
%        %\State $k \gets k + 1$
%    \EndFor
%  \end{algorithmic}
%\label{A:gauss}
%\end{algorithm}

\begin{algorithm}[H]
	\caption{Skewsymmetric($d,\,s$) algorithm}
	
    \Input A natural $d$ indicating the dimension of the square matrix to generate.

	\Input An integer $s$ indicating the initial subindex of the variables to use.

    \Output A skewsymmetric $d\times d$ matrix whose variables are $a_{s}$, $a_{s+1}, \ldots$

  \begin{algorithmic}[1]
    \State Let $A$ be a empty $d\times d$ matrix
    \State Let $num = s$ \Comment{$num$ keeps the current variable value}
    \For {$i =1$ {\bf to} $i=d$}
    		\State Let $v = (a_{num}, \ldots, a_{num+d+i-1})$ a vector
    		\State $A[i,i+1 :: d] = v$
    		\State $A[i+1 :: d, i] = -v$.
     	\State $num = num + d-i$
	\EndFor
	
	\State \Return $A$
  \end{algorithmic}
\label{A:skewsymmetic}
\end{algorithm}

So, using this algorithm, for values $d=4$ and $s = 1$ the return matrix is
\begin{equation*}
\mathrm{Skewsymmetric}(4,1)=
\left(
\begin{array}{cccc}
0 & a_1 & a_2 & a_3 \\
-a_1 & 0 & a_4 & a_5 \\
-a_2 & -a_4 & 0 & a_6 \\
-a_3 & -a_5 & -a_6 & 0
\end{array}
\right).
\end{equation*}
To construct the adjoint matrices we will need to know hoy many subindexes we have used before, in order not to use them again. In a generic $d\times d$ matrix
\begin{equation*}
\sum_{i=1}^d i = \frac{d(d+1)}{2}
\end{equation*}
different variables are used. So, in the $m$-th adjoint matrix, due to the relationships among them, we will have used
\begin{equation*}
\sum_{i=d}^m \frac{i(i+1)}{2} = \frac{(1 + m - nd) (2 m + m^2 + d + m d + d^2)}{6}.
\end{equation*}
variables. We call this quantities varIn($d$) and varUntil($d,m$) respectively. With this notation
\begin{equation*}
\mathrm{varUntil}(d,m) = \sum_{i=d}^m \mathrm{varIn}(i).
\end{equation*}

\begin{algorithm}[H]
	\caption{Adjoint($d,\,i$) algorithm}
	
    \Input A natural $d$ indicating the dimension of the square adjoint matrix to generate.

	\Input An integer $i$ indicating which adjoint is it, assuming $i=1$ is the adjoint associated to the first element in the basis, and $i=d$ the one related to the last one.

    \Output The adjoint matrix associated to the element $x_i$ of the chosen basis.

  \begin{algorithmic}[1]
    \State Let $A_i$ be a empty $d\times d$ matrix
	\For {$j=1$ {\bf to} $j=i$} \Comment{The part deduced from previous adjoints}
		\State $A_i[1::d, j] = - \mathrm{Skewsymmetric}(d,j)[1::d, i]$
		\State $A_i[j, j::d] = - A_i[j::d,j]$
	\EndFor
    \State $A_i[i+1:d,i+1:d] = \mathrm{Skewsymmetric}(d-i, 1 + \mathrm{varUntil}(d-i, d-2))$
	
	\State \Return $A_i$
  \end{algorithmic}
\label{A:adjoint}
\end{algorithm}

Note that this algorithm is defined recursively. Although if it is needed an iterative version can be easily develop. In order to improve the efficiency it is highly recommended to store the already calculated adjoint matrices in order not to repeat operations.\medskip

Once we are able to obtain the adjoint matrices we can construct our $C[d] := B(A_1,\dots, A_d)$ matrix. This matrix is the union of certain columns of the adjoint matrices: from each adjoint, considering we are at the $i$-th adjoint, we take columns $i+1, i+2, \dots, d$.\medskip

Now we see a complete example of the calculus of matrix $B(A_1,\dots, A_d)$, for $A_i = \mathrm{Adjoint}(d,i)$ and $d = 5$.

\begin{equation*}
\begin{array}{cc}
 A_1 = \left(
\begin{array}{ccccc}
 0 & 0 & 0 & 0 & 0 \\
 0 & 0 & a_{1} & a_{2} & a_{3} \\
 0 & -a_{1} & 0 & a_{4} & a_{5} \\
 0 & -a_{2} & -a_{4} & 0 & a_{6} \\
 0 & -a_{3} & -a_{5} & -a_{6} & 0 \\
\end{array}
\right)&
A_2 = \left(
\begin{array}{ccccc}
 0 & 0 & -a_{1} & -a_{2} & -a_{3} \\
 0 & 0 & 0 & 0 & 0 \\
 a_{1} & 0 & 0 & a_{7} & a_{8} \\
 a_{2} & 0 & -a_{7} & 0 & a_{9} \\
 a_{3} & 0 & -a_{8} & -a_{9} & 0 \\
\end{array}
\right)
\end{array}
\end{equation*}
\begin{equation*}
A_3 = \left(
\begin{array}{ccccc}
 0 & a_{1} & 0 & -a_{4} & -a_{5} \\
 -a_{1} & 0 & 0 & -a_{7} & -a_{8} \\
 0 & 0 & 0 & 0 & 0 \\
 a_{4} & a_{7} & 0 & 0 & a_{10} \\
 a_{5} & a_{8} & 0 & -a_{10} & 0 \\
\end{array}
\right)
\end{equation*}
\begin{equation*}
\begin{array}{cc}
A_4 = 
\begin{pmatrix}
 0 & a_{2} & a_{4} & 0 & -a_{6} \\
 -a_{2} & 0 & a_{7} & 0 & -a_{9} \\
 -a_{4} & -a_{7} & 0 & 0 & -a_{10} \\
 0 & 0 & 0 & 0 & 0 \\
 a_{6} & a_{9} & a_{10} & 0 & 0 \\
\end{pmatrix}
&
A_5 =
\begin{pmatrix}
 0 & a_{3} & a_{5} & a_{6} & 0 \\
 -a_{3} & 0 & a_{8} & a_{9} & 0 \\
 -a_{5} & -a_{8} & 0 & a_{10} & 0 \\
 -a_{6} & -a_{9} & -a_{10} & 0 & 0 \\
 0 & 0 & 0 & 0 & 0 \\
\end{pmatrix}
\end{array}
\end{equation*}

{\small
\begin{equation*}
C[5]=B(A_1,\dots, A_5)=
{\def\arraycolsep{3.3pt}
\begin{pmatrix}
 0 & 0 & 0 & 0 & -a_{1} & -a_{2} & -a_{3} & -a_{4} & -a_{5} & -a_{6} \\
 0 & a_{1} & a_{2} & a_{3} & 0 & 0 & 0 & -a_{7} & -a_{8} & -a_{9} \\
 -a_{1} & 0 & a_{4} & a_{5} & 0 & a_{7} & a_{8} & 0 & 0 & -a_{10} \\
 -a_{2} & -a_{4} & 0 & a_{6} & -a_{7} & 0 & a_{9} & 0 & a_{10} & 0 \\
 -a_{3} & -a_{5} & -a_{6} & 0 & -a_{8} & -a_{9} & 0 & -a_{10} & 0 & 0 \\
\end{pmatrix}}
\end{equation*}}

\subsection{Rank and product table generation algorithms}

Once we are able to create $B(A_1,\dots, A_d)$ matrices for every dimension $d$, we have to study for which values of the parameters $a_i$ the rank is maximum, that is, the rank is $d$. In order to generate examples, this can be done by using any symbolic computational program (Mathematica among others). \medskip

One general way to compute the rank is choosing the minors of the matrix of size $d\times d$ and getting all the conditions the variables $a_i$ have to satisfy to have at least one minor whose determinant is not null, making the rank maximum. Therefore, the problem is simply finding the $\binom{\frac{d(d-1)}{2}}{d}$ minors and calculating their determinants. Unfortunately we are working with factorial complexity so increasing the dimension make the complexity extremely huge. For example, in the case $d=5$ we already have $252$ minors. Other way is using Gauss method, althought the posible nullity of variables might make it difficult too.\medskip

%De todas las maneras, incluimos el algoritmo para su resoluci—n que se describe a continuaci—n.
%
%\textcolor{red}{\textbf{\textsc{Algoritmo rango...insetrtar en....}}}
%
%\textcolor{red}{\textbf{\textsc{Incluir Algoritmo generaci\'on de tablas de productos...}}}

Following Theorem \ref{clasifcuadraticas}, the multiplication table for a quadratic reduced Lie algebra obtained from a $5$-quadratic family $\{A_1,\dots,A_5\}$ (so $\rank C[5]=5$) in the basis $\{v_1,\dots, v_{5}, z_1,\dots,z_5\}$ is
\begin{alignat*}{8} 
v_1 \wedge v_2 & =-a_{1} z_3 - a_{2} z_4 - a_{3} z_5,\qquad  &  v_2 \wedge v_4 & =-a_{2} z_1 + a_{7} z_3 - a_{9}  z_5, \\
v_1 \wedge v_3 & = a_{1} z_2 - a_{4} z_4 - a_{5} z_5,  &  v_2 \wedge v_5 & =-a_{3} z_1 + a_{8} z_3 + a_{9}  z_4, \\
v_1 \wedge v_4 & = a_{2} z_2 + a_{4} z_3 - a_{6} z_5,  &  v_3 \wedge v_4 & =-a_{4} z_1 - a_{7} z_2 - a_{10} z_5, \\
v_1 \wedge v_5 & = a_{3} z_2 + a_{5} z_3 + a_{6} z_4,  &  v_3 \wedge v_5 & =-a_{5} z_1 - a_{8} z_2 + a_{10} z_4, \\
v_2 \wedge v_3 & =-a_{1} z_1 - a_{7} z_4 - a_{8} z_5,  &  v_4 \wedge v_5 & =-a_{6} z_1 - a_{9} z_2 - a_{10} z_3. 
\end{alignat*}

In case $d=6$ the matrix $C[6] = B(A_1,\dots, A_6)$ is

{\footnotesize 
\begin{equation*}
\arraycolsep=1.11pt
    C[6] = 
\left(
\begin{array}{ccccccccccccccc}
 0 & 0 & 0 & 0 & 0 & -a_{1} & -a_{2} & -a_{3} & -a_{4} & -a_{5} & -a_{6} & -a_{7} & -a_{8} & -a_{9} & -a_{10} \\
 0 & a_{1} & a_{2} & a_{3} & a_{4} & 0 & 0 & 0 & 0 & -a_{11} & -a_{12} & -a_{13} & -a_{14} & -a_{15} & -a_{16} \\
 -a_{1} & 0 & a_{5} & a_{6} & a_{7} & 0 & a_{11} & a_{12} & a_{13} & 0 & 0 & 0 & -a_{17} & -a_{18} & -a_{19} \\
 -a_{2} & -a_{5} & 0 & a_{8} & a_{9} & -a_{11} & 0 & a_{14} & a_{15} & 0 & a_{17} & a_{18} & 0 & 0 & -a_{20} \\
 -a_{3} & -a_{6} & -a_{8} & 0 & a_{10} & -a_{12} & -a_{14} & 0 & a_{16} & -a_{17} & 0 & a_{19} & 0 & a_{20} & 0 \\
 -a_{4} & -a_{7} & -a_{9} & -a_{10} & 0 & -a_{13} & -a_{15} & -a_{16} & 0 & -a_{18} & -a_{19} & 0 & -a_{20} & 0 & 0 \\
\end{array}
\right),
\end{equation*}}
and gives use all the quadratic algebras described by the multiplication table (only those tables related to matrices $C[6]$ such that $\rank C[6]=6$ are valid).
{\fontsize{9}{11}\selectfont
\begin{alignat*}{8}
v_1 \wedge v_2 & =-a_{1} z_3 - a_{2} z_4-a_{3} z_5 - a_{4}  z_6,\qquad  &  v_2 \wedge v_6 & =-a_{4} z_1 + a_{13} z_3 + a_{15} z_4 + a_{16} z_5\\
v_1 \wedge v_3 & = a_{1} z_2 - a_{5} z_4-a_{6} z_5 - a_{7}  z_6,  & v_3 \wedge v_4&=-a_{5} z_1-a_{11} z_2-a_{17} z_5-a_{18} z_6\\
v_1 \wedge v_4 & = a_{2} z_2 + a_{5} z_3-a_{8} z_5 - a_{9}  z_6,  & v_3 \wedge v_5&=-a_{6} z_1-a_{1} z_2+a_{17} z_4-a_{19} z_6\\
v_1 \wedge v_5 & = a_{3} z_2 + a_{6} z_3+a_{8} z_4 - a_{10} z_6,  &v_3 \wedge v_6&=-a_{7} z_1-a_{13} z_2+a_{18} z_4+a_{19} z_5\\
v_1 \wedge v_6 & = a_{4} z_2 + a_{7} z_3+a_{9} z_4 + a_{10} z_5,  &  v_4 \wedge v_5&=-a_{8} z_1-a_{14} z_2-a_{17} z_3-a_{20} z_6\\
v_2 \wedge v_3 & =-a_{1} z_1 - a_{11} z_4 - a_{12} z_5-a_{13} z_6,  & v_4 \wedge v_6&=-a_{9} z_1-a_{15} z_2-a_{18} z_3+a_{20} z_5\\
v_2 \wedge v_4 & =-a_{2} z_1 + a_{11} z_3-a_{14} z_5 - a_{15} z_6, & v_5 \wedge v_6&=-a_{10} z_1-a_{16} z_2-a_{19} z_3-a_{20} z_4\\
 v_2 \wedge v_5&=-a_{3} z_1+a_{12} z_3+a_{14} z_4-a_{16} z_6& 
\end{alignat*}}
Note that the multiplication table can be easily obtained by means of the columns of the matrix $C[d]$.

%\begin{align*}
% v_1 \wedge v_2&=-a_{1} z_3-a_{2}  z_4-a_{3}  z_5-a_{4} z_6,\\
% v_1 \wedge v_3&= a_{1} z_2-a_{5}  z_4-a_{6}  z_5-a_{7} z_6,\\
% v_1 \wedge v_4&= a_{2} z_2+a_{5}  z_3-a_{8}  z_5-a_{9} z_6,\\
% v_1 \wedge v_5&= a_{3} z_2+a_{6}  z_3+a_{8}  z_4-a_{10} z_6,\\
% v_1 \wedge v_6&= a_{4} z_2+a_{7}  z_3+a_{9}  z_4+a_{10} z_5,\\
% v_2 \wedge v_3&=-a_{1} z_1-a_{11} z_4-a_{12} z_5-a_{13} z_6,\\
% v_2 \wedge v_4&=-a_{2} z_1+a_{11} z_3-a_{14} z_5-a_{15} z_6,\\
% v_2 \wedge v_5&=-a_{3} z_1+a_{12} z_3+a_{14} z_4-a_{16} z_6,\\
% v_2 \wedge v_6&=-a_{4} z_1+a_{13} z_3+a_{15} z_4+a_{16} z_5,\\
% v_3 \wedge v_4&=-a_{5} z_1-a_{11} z_2-a_{17} z_5-a_{18} z_6,\\
% v_3 \wedge v_5&=-a_{6} z_1-a_{12} z_2+a_{17} z_4-a_{19} z_6,\\
% v_3 \wedge v_6&=-a_{7} z_1-a_{13} z_2+a_{18} z_4+a_{19} z_5,\\
% v_4 \wedge v_5&=-a_{8} z_1-a_{14} z_2-a_{17} z_3-a_{20} z_6,\\
% v_4 \wedge v_6&=-a_{9} z_1-a_{15} z_2-a_{18} z_3+a_{20} z_5,\\
% v_5 \wedge v_6&=-a_{10} z_1-a_{16} z_2-a_{19} z_3-a_{20} z_4.
%\end{align*}

%%%%%%%%%%%%%%%%%%%%%%%%%%%%%%%%%%%%%%%%%%%%%%%%%%%%%%%%%%%%%%%%%%%%%%%%%%
%%%%% Incluir el algoritmo de c‡lculo de menores y condiciones %%%%%%%%%%%
%%%%%%%%%%%%%%%%%%%%%%%%%%%%%%%%%%%%%%%%%%%%%%%%%%%%%%%%%%%%%%%%%%%%%%%%%%

\subsection{Final comments and examples}
In previous subsection 4.2 we have noted the limitations caused by the rank calculus of $C[d]$. In the following Table \ref{minimos} we can see the minimum number of variables needed to obtain maximum rank for small values of $d$.  The final Table \ref{C[d]} displays the matrices $C[d]$ for $d=5,6,7,8$.

\begin{table}[H]
\centering
%\begin{rotate}{90}
\begin{tabular}{|l|l|}
\hline
Dimension & Condition to make the rank maximum\\\hline\hline
$d = 1$  & Invalid\\\hline
$d = 2$  & Invalid\\\hline
$d = 3$  & There is just one variable, $a_1$, and it has to be non null.\\\hline
$d = 4$  & Impossible.\\\hline
$d = 5,6$  & With just one non null variable we cannot obtain it.\\
&wee need at least two.\\\hline
$d = 7,8,9$  & With just two non null variables we cannot obtain it;\\
&wee need at least three.\\\hline
\end{tabular}

\caption{Minimun number of variables needed to get maximum rank in $C[d]$.}
\label{minimos}
\end{table}

\newcommand{\misep}{3.65}

\begin{sidewaystable}
\begin{table}[H]
\centering
\begin{tabular}{ll}
\hline
$d$ & $\quad C[d]$\\\hline
  &  \\
$5$  & {\tiny $\left(
{\def\arraycolsep{\misep pt}
\begin{array}{cccccccccc}
 0 & 0 & 0 & 0 & -a_{1} & -a_{2} & -a_{3} & -a_{4} & -a_{5} & -a_{6} \\
 0 & a_{1} & a_{2} & a_{3} & 0 & 0 & 0 & -a_{7} & -a_{8} & -a_{9} \\
 -a_{1} & 0 & a_{4} & a_{5} & 0 & a_{7} & a_{8} & 0 & 0 & -a_{10} \\
 -a_{2} & -a_{4} & 0 & a_{6} & -a_{7} & 0 & a_{9} & 0 & a_{10} & 0 \\
 -a_{3} & -a_{5} & -a_{6} & 0 & -a_{8} & -a_{9} & 0 & -a_{10} & 0 & 0 \\
\end{array}}
\right)$}\\
  &  \\
$6$  & {\tiny $\left(
{\def\arraycolsep{\misep pt}
\begin{array}{ccccccccccccccc}
 0 & 0 & 0 & 0 & 0 & -a_{1} & -a_{2} & -a_{3} & -a_{4} & -a_{5} & -a_{6} & -a_{7} & -a_{8} & -a_{9} & -a_{10} \\
 0 & a_{1} & a_{2} & a_{3} & a_{4} & 0 & 0 & 0 & 0 & -a_{11} & -a_{12} & -a_{13} & -a_{14} & -a_{15} & -a_{16} \\
 -a_{1} & 0 & a_{5} & a_{6} & a_{7} & 0 & a_{11} & a_{12} & a_{13} & 0 & 0 & 0 & -a_{17} & -a_{18} & -a_{19} \\
 -a_{2} & -a_{5} & 0 & a_{8} & a_{9} & -a_{11} & 0 & a_{14} & a_{15} & 0 & a_{17} & a_{18} & 0 & 0 & -a_{20} \\
 -a_{3} & -a_{6} & -a_{8} & 0 & a_{10} & -a_{12} & -a_{14} & 0 & a_{16} & -a_{17} & 0 & a_{19} & 0 & a_{20} & 0 \\
 -a_{4} & -a_{7} & -a_{9} & -a_{10} & 0 & -a_{13} & -a_{15} & -a_{16} & 0 & -a_{18} & -a_{19} & 0 & -a_{20} & 0 & 0 \\
\end{array}}
\right)$}\\
  &  \\
$7$  &{\tiny $\left(
{\def\arraycolsep{\misep pt}
\begin{array}{ccccccccccccccccccccc}
 0 & 0 & 0 & 0 & 0 & 0 & -a_{1} & -a_{2} & -a_{3} & -a_{4} & -a_{5} & -a_{6} & -a_{7} & -a_{8} & -a_{9} & -a_{10} & -a_{11} & -a_{12} & -a_{13} & -a_{14} & -a_{15} \\
 0 & a_{1} & a_{2} & a_{3} & a_{4} & a_{5} & 0 & 0 & 0 & 0 & 0 & -a_{16} & -a_{17} & -a_{18} & -a_{19} & -a_{20} & -a_{21} & -a_{22} & -a_{23} & -a_{24} & -a_{25} \\
 -a_{1} & 0 & a_{6} & a_{7} & a_{8} & a_{9} & 0 & a_{16} & a_{17} & a_{18} & a_{19} & 0 & 0 & 0 & 0 & -a_{26} & -a_{27} & -a_{28} & -a_{29} & -a_{30} & -a_{31} \\
 -a_{2} & -a_{6} & 0 & a_{10} & a_{11} & a_{12} & -a_{16} & 0 & a_{20} & a_{21} & a_{22} & 0 & a_{26} & a_{27} & a_{28} & 0 & 0 & 0 & -a_{32} & -a_{33} & -a_{34} \\
 -a_{3} & -a_{7} & -a_{10} & 0 & a_{13} & a_{14} & -a_{17} & -a_{20} & 0 & a_{23} & a_{24} & -a_{26} & 0 & a_{29} & a_{30} & 0 & a_{32} & a_{33} & 0 & 0 & -a_{35} \\
 -a_{4} & -a_{8} & -a_{11} & -a_{13} & 0 & a_{15} & -a_{18} & -a_{21} & -a_{23} & 0 & a_{25} & -a_{27} & -a_{29} & 0 & a_{31} & -a_{32} & 0 & a_{34} & 0 & a_{35} & 0 \\
 -a_{5} & -a_{9} & -a_{12} & -a_{14} & -a_{15} & 0 & -a_{19} & -a_{22} & -a_{24} & -a_{25} & 0 & -a_{28} & -a_{30} & -a_{31} & 0 & -a_{33} & -a_{34} & 0 & -a_{35} & 0 & 0 \\
\end{array}}
\right)$}\\
  &  \\
$8$  & {\tiny $\left(
{\def\arraycolsep{\misep pt}
\begin{array}{ccccccccccccccccccccc}
 0 & 0 & 0 & 0 & 0 & 0 & 0 & -a_{1} & -a_{2} & -a_{3} & -a_{4} & -a_{5} & -a_{6} & -a_{7} & -a_{8} & -a_{9} & -a_{10} & -a_{11} & -a_{12} & -a_{13} & -a_{14}\\
 0 & a_{1} & a_{2} & a_{3} & a_{4} & a_{5} & a_{6} & 0 & 0 & 0 & 0 & 0 & 0 & -a_{22} & -a_{23} & -a_{24} & -a_{25} & -a_{26} & -a_{27} & -a_{28} & -a_{29}\\
 -a_{1} & 0 & a_{7} & a_{8} & a_{9} & a_{10} & a_{11} & 0 & a_{22} & a_{23} & a_{24} & a_{25} & a_{26} & 0 & 0 & 0 & 0 & 0 & -a_{37} & -a_{38} & -a_{39}\\
 -a_{2} & -a_{7} & 0 & a_{12} & a_{13} & a_{14} & a_{15} & -a_{22} & 0 & a_{27} & a_{28} & a_{29} & a_{30} & 0 & a_{37} & a_{38} & a_{39} & a_{40} & 0 & 0 & 0\\
 -a_{3} & -a_{8} & -a_{12} & 0 & a_{16} & a_{17} & a_{18} & -a_{23} & -a_{27} & 0 & a_{31} & a_{32} & a_{33} & -a_{37} & 0 & a_{41} & a_{42} & a_{43} & 0 & a_{47} & a_{48}\\
 -a_{4} & -a_{9} & -a_{13} & -a_{16} & 0 & a_{19} & a_{20} & -a_{24} & -a_{28} & -a_{31} & 0 & a_{34} & a_{35} & -a_{38} & -a_{41} & 0 & a_{44} & a_{45} & -a_{47} & 0 & a_{50}\\
 -a_{5} & -a_{10} & -a_{14} & -a_{17} & -a_{19} & 0 & a_{21} & -a_{25} & -a_{29} & -a_{32} & -a_{34} & 0 & a_{36} & -a_{39} & -a_{42} & -a_{44} & 0 & a_{46} & -a_{48} & -a_{50} & 0 \\
 -a_{6} & -a_{11} & -a_{15} & -a_{18} & -a_{20} & -a_{21} & 0 & -a_{26} & -a_{30} & -a_{33} & -a_{35} & -a_{36} & 0 & -a_{40} & -a_{43} & -a_{45} & -a_{46} & 0 & -a_{49} & -a_{51} & -a_{52}\\
\end{array}}
\right.$}\\
  &  \\
& {\tiny $\left.
{\def\arraycolsep{\misep pt}
\begin{array}{ccccccc}
-a_{15} & -a_{16} & -a_{17} & -a_{18} & -a_{19} & -a_{20} & -a_{21} \\
-a_{30} & -a_{31} & -a_{32} & -a_{33} & -a_{34} & -a_{35} & -a_{36} \\
-a_{40} & -a_{41} & -a_{42} & -a_{43} & -a_{44} & -a_{45} & -a_{46} \\
0       & -a_{47} & -a_{48} & -a_{49} & -a_{50} & -a_{51} & -a_{52} \\
 a_{49} & 0       & 0       & 0       & -a_{53} & -a_{54} & -a_{55} \\
 a_{51} & 0       &  a_{53} &  a_{54} & 0       & 0       & -a_{56} \\
 a_{52} & -a_{53} & 0       &  a_{55} & 0       &  a_{56} & 0       \\
 0      & -a_{54} & -a_{55} & 0       & -a_{56} & 0       & 0       \\
\end{array}}
\right)$}\\
  &  \\
\hline
\end{tabular}
\caption{Matrices $B(A_1,\dots,A_d)$ attached to a $d$-quadratic family of matrices $\{A_1,\dots,A_d\}$.}
\label{C[d]}
\end{table}
\end{sidewaystable}

For example if $d=6$, among its 20 variables, only the couples $(a_i, a_{21-i})$ (both variables being non null) with the rest of them null, make the rank maximum. In the case $d=5$ each variable appears in three possible couples that makes the rank maximum, while in $n=7$ there is no couples of variables, we need at least three non null parameters to get rank 7. For $d=7,8,9$ the non null triples $(a_1,a_{10},a_{15})$, $(a_1, a_{12},a_{56})$ and $(a_1,a_{65},a_{85})$ provide quadratic Lie algebras of types $7$, $8$ and $9$ respectively.\medskip

According to Corollary 3.6 in \cite{NoRe97}, there is a finite number of quadratic $2$-step nilpotent Lie algebras of dimension $\leq 17$. The complete list of these algebras is unknown (in \cite{Ka07} we can find those up to dimension $10$) and it is encoded in the matrices listed in Table \ref{C[d]}. The real Lie groups associated to these algebras provide examples of compact pseudo-Riemannian nilmanifolds.

\bigskip

\noindent{\bf Acknowledgments.} Pilar Benito and Daniel de-la-Concepci\'on acknowledge financial support from the Spanish Ministerio de Econom\'ia y Com\-pe\-ti\-ti\-vi\-dad (MTM 2013-45588-CO3-3). Daniel de-la-Concepci\'on also thanks support from Spanish FPU Grant 12/03224.
\bigskip

%\noindent{\bf References}

\vfill
{\noindent
\author{Pilar Benito, Daniel de-la-Concepci\'on, Jorge Rold\'an-L\'opez, Iciar Sesma %\footnote {The two authors have been
%supported by the Spanish Ministerio de Econom\'{\i}a y Competitividad (MTM 2013-45588-CO3-3), and the second author also by a FPU grant (12/03224) of the Spanish Ministerio de Educaci\'on y Ciencia} }
\\{\small Departamento de Matem\'aticas y Computaci\'on}\\
{\small  Universidad de La Rioja}\\
{\small  26004, Logro\~no. Spain}\\
{\small pilar.benito@unirioja.es}, {\small daniel-de-la.concepcion@unirioja.es},  \\{\small jorge.roldanl@unirioja.es}, {\small icsesma@unirioja.es}

\end{document}